
\documentclass[11pt]{amsart}
\usepackage{amssymb}
\usepackage{epsfig}
\usepackage{mathdots}
 \theoremstyle{plain}
\newtheorem{theorem}{Theorem}
\newtheorem{corollary}{Corollary}

\newtheorem{lemma}{Lemma}
\newtheorem{proposition}{Proposition}
\newtheorem{example}{Example}
\theoremstyle{definition}
\newtheorem{definition}{Definition}
\theoremstyle{remark}

\numberwithin{equation}{section}

\newcommand{\bT}{\begin{theorem}}
\newcommand{\eT}{\end{theorem}}
\newcommand{\bProp}{\begin{proposition}}
\newcommand{\eProp}{\end{proposition}}
\newcommand{\bE}{\begin{example}}
\newcommand{\eE}{\end{example}}
\newcommand{\bL}{\begin{lemma}}
\newcommand{\eL}{\end{lemma}}
\newcommand{\bP}{\begin{proof}}
\newcommand{\eP}{\end{proof}}
\newcommand{\bC}{\begin{corollary}}
\newcommand{\eC}{\end{corollary}}
\newcommand{\bD}{\begin{definition}}
\newcommand{\eD}{\end{definition}}
\newcommand{\be}{\begin{enumerate}}
\newcommand{\ee}{\end{enumerate}}
\newcommand{\beqa}{\begin{eqnarray*}}
\newcommand{\eeqa}{\end{eqnarray*}}
\newcommand{\beqaa}{\begin{eqnarray}}
\newcommand{\eeqaa}{\end{eqnarray}}
\newcommand{\ba}{\begin{array}}
\newcommand{\ea}{\end{array}}

\setbox0=\hbox{$+$}
\newdimen\plusheight
\plusheight=\ht0
\def\+{\;\lower\plusheight\hbox{$+$}\;}

\setbox0=\hbox{$-$}
\newdimen\minusheight
\minusheight=\ht0
\def\-{\;\lower\minusheight\hbox{$-$}\;}

\setbox0=\hbox{$\cdots$}
\newdimen\cdotsheight
\cdotsheight=\plusheight
\def\cds{\lower\cdotsheight\hbox{$\cdots$}}
\begin{document}

\title[Some Implications of the WP-Bailey Tree ]
       {Some Implications of the WP-Bailey Tree}
\author{James Mc Laughlin}
\address{Mathematics Department\\
 Anderson Hall\\
West Chester University, West Chester, PA 19383}
\email{jmclaughl@wcupa.edu}

\author{Peter Zimmer}
\address{Mathematics Department\\
 Anderson Hall\\
West Chester University, West Chester, PA 19383}
\email{pzimmer@wcupa.edu}

 \keywords{ Q-Series, Rogers-Ramanujan Type Identities,
 Bailey chains,  WP-Bailey pairs}
 \subjclass[2000]{Primary: 33D15. Secondary:11B65, 05A19.}

\date{\today}

\begin{abstract}
We consider a special case of a WP-Bailey chain of George Andrews,
and use it to derive a number of curious transformations of basic
hypergeometric series.

We also derive two new WP-Bailey pairs, and use them to derive some
additional new transformations for basic hypergeometric series.

Finally, we briefly consider the implications of WP-Bailey pairs\\
$(\alpha_n(a,k)$, $\beta_n(a,k))$, in which  $\alpha_n(a,k)$ is
independent of $k$, for generalizations of identities of the
Rogers-Ramanujan type.
\end{abstract}

\maketitle

\section{Introduction}

 Andrews, building on prior work of Bressoud
\cite{B81a}
 and Singh \cite{S94}, in \cite{A01}
 defined a \emph{WP-Bailey pair} to be  a pair of sequences
 $(\alpha_{n}(a,k),\,\beta_{n}(a,k))$ satisfying
 {\allowdisplaybreaks
\begin{align}\label{WPpair}
\beta_{n}(a,k) &= \sum_{j=0}^{n}
\frac{(k/a)_{n-j}(k)_{n+j}}{(q)_{n-j}(aq)_{n+j}}\alpha_{j}(a,k)\\
&= \frac{(k/a,k;q)_n}{(aq,q;q)_n}\sum_{j=0}^{n}
\frac{(q^{-n})_{j}(kq^n)_{j}}{(aq^{1-n}/k)_{j}(aq^{n+1})_{j}}
\left(\frac{qa}{k}\right)^j\alpha_{j}(a,k). \notag
\end{align}
}

Andrews also showed in \cite{A01} that there were two distinct ways
to construct new WP-Bailey pairs from a given pair. If
$(\alpha_{n}(a,k),\,\beta_{n}(a,k))$ satisfy \eqref{WPpair}, then so
do $(\alpha_{n}'(a,k),\,\beta_{n}'(a,k))$ and
$(\tilde{\alpha}_{n}(a,k),\,\tilde{\beta}_{n}(a,k))$, where
{\allowdisplaybreaks
\begin{align}\label{wpn1}
\alpha_{n}'(a,k)&=\frac{(\rho_1, \rho_2)_n}{(aq/\rho_1,
aq/\rho_2)_n}\left(\frac{k}{c}\right)^n\alpha_{n}(a,c),\\
\beta_{n}'(a,k)&=\frac{(k\rho_1/a,k\rho_2/a)_n}{(aq/\rho_1,
aq/\rho_2)_n} \notag\\
&\phantom{as}\times \sum_{j=0}^{n} \frac{(1-c
q^{2j})(\rho_1,\rho_2)_j(k/c)_{n-j}(k)_{n+j}}{(1-c)(k\rho_1/a,k\rho_2/a)_n(q)_{n-j}(qc)_{n+j}}
\left(\frac{k}{c}\right)^j\beta_{j}(a,c), \notag
\end{align}
}with  $c=k\rho_1 \rho_2/aq$ for the pair above, and
{\allowdisplaybreaks
\begin{align}\label{wpn2}
\tilde{\alpha}_{n}(a,k)&= \frac{(qa^2/k)_{2n}}{(k)_{2n}}\left
(\frac{k^2}{q a^2} \right)^n\alpha_{n} \left(a, \frac{q a^2}{k}
\right), \\
\tilde{\beta}_{n}(a,k)&=\sum_{j=0}^{n}
\frac{(k^2/qa^2)_{n-j}}{(q)_{n-j}}\left (\frac{k^2}{q a^2}
\right)^j\beta_{j} \left(a, \frac{q a^2}{k} \right). \notag
\end{align}
} These two constructions allow a ``tree" of WP-Bailey pairs to be
generated from a single WP-Bailey pair.  Andrews and Berkovich
\cite{AB02} further investigated these two branches of the WP-Bailey
tree, in the process deriving many new transformations for basic
hypergeometric series. Spiridonov \cite{S02} derived an elliptic
generalization of Andrews first WP-Bailey chain, and Warnaar
\cite{W03} \footnote{In a note added after submitting the paper
\cite{W03}, Warnaar remarks that he had discovered many more
transformations for basic and elliptic WP-Bailey pairs, and gives
two further examples of chains that hold at the elliptic level. }
added four new branches to the WP-Bailey tree, two of which had
generalizations to the elliptic level. More recently, Liu and Ma
\cite{LM08} introduced the idea of a general WP-Bailey chain (as a
solution to a system of linear equations), and added one new branch
to the WP-Bailey tree.

In the present paper, we derive two new WP-Bailey pairs, one of them
restricted in the sense that it is necessary to set $k=q$. We then
insert these in some of the WP-Bailey chains listed above to derive
new transformations of basic hypergeometric series.

We also consider a special case ($k=aq$) of the first WP-Bailey
chain of Andrews, and show how it leads some unusual transformations
of series.

We also briefly consider the special case of a WP-Bailey pair
$(\alpha_n(a),$ $\beta_n(a,k))$, where the $\alpha_n$ are
independent of $k$. We show how a such pair may give rise to a
generalization of a Slater-type identity deriving from the standard
Bailey pair $(\alpha_n(a), \beta_n(a,0))$.

\section{Finite Basic Hypergeometric Identities deriving from Simple WP-Bailey pairs}

In this section we derive some unusual transformations from
\eqref{wpn1} by inserting ``simple" WP-Bailey pairs (see below for
the definition).

We reformulate the constructions at \eqref{wpn1} and (for later use)
\eqref{wpn2} as
 transformations relating the original WP-Bailey pair
$(\alpha_{n}(a,k),\,\beta_{n}(a,k))$. We first recall the following
elementary transformation:
\begin{equation}\label{aqn-r}
(a;q)_{n-r} = \frac{ q^{r(r+1)/2}(a;q)_n}
{(q^{1-n}/a;q)_r(-aq^{n})^r}.
\end{equation}

\begin{theorem}\label{wpbt}
If $(\alpha_n(a,k),\beta_n(a,k))$ satisfy
\begin{equation}\label{wpbtabcons}
\beta_{n}(a,k) = \sum_{j=0}^{n}
\frac{(k/a)_{n-j}(k)_{n+j}}{(q)_{n-j}(aq)_{n+j}}\alpha_{j}(a,k),
\end{equation}
then {\allowdisplaybreaks
\begin{multline}\label{wpbteq1}
\sum_{n=0}^{N} \frac{(1-kq^{2n})(\rho_1,\rho_2,k a
q^{N+1}/\rho_1\rho_2,q^{-N};q)_n }{(1-k)(k q/\rho_1,k
q/\rho_2,\rho_1 \rho_2 q^{-N}/a,k q^{1+N};q)_n}\,q^n \beta_n(a,k)\\=
\frac{(k q,k q/\rho_1 \rho_2,a q/\rho_1,a q/\rho_2;q)_{N}} {(k
q/\rho_1,k q/\rho_2,a q/\rho_1 \rho_2,a
q;q)_{N}}\phantom{asdadasdasdabvvbvmvmbnvbnvdassdas}\\
\times \sum_{n=0}^{N} \frac{(\rho_1,\rho_2, k a q^{N+1}/\rho_1
\rho_2,q^{-N};q)_{n}} {(a q/\rho_1,a q/\rho_2,a q^{1+N}, \rho_1
\rho_2 q^{-N}/k;q)_{n}}\left ( \frac{a q}{k}\right)^n \alpha_n(a,k),
\end{multline}
}and {\allowdisplaybreaks \begin{multline}\label{wpbteq2}
\sum_{n=0}^{N} \frac{(q^{-N};q)_n }{(q^{-N}k^2/a^2;q)_n}\,q^n
\beta_n(a,k)= \frac{(qa/k,qa^2/k;q)_{N}}
{(q a,qa^2/k^2;q)_{N}}\\
\times \sum_{n=0}^{N} \frac{( q^{N+1}a^2/k,q^{-N};q)_{n}(k;q)_{2n}}
{(a q^{1+N},  q^{-N}k/a;q)_{n}(q a^2/k;q)_{2n}}\left ( \frac{a
q}{k}\right)^n \alpha_n(a,k).
\end{multline}
}
\end{theorem}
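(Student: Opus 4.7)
The plan is to apply each of the two WP-Bailey chains to the hypothesized pair $(\alpha_n(a,k), \beta_n(a,k))$ and, in each case, evaluate $\beta$ of the resulting chained pair at index $N$ in two different ways, then equate.

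For \eqref{wpbteq1}, feed the hypothesized pair into \eqref{wpn1} by identifying the chain's input parameter $c$ with the theorem's $k$, and write $K = aqk/(\rho_1\rho_2)$ for the output's second parameter; the new pair $(\alpha'_n(a,K),\beta'_n(a,K))$ is then a WP-Bailey pair by \eqref{wpn1}. On the one hand, the terminating second form of \eqref{WPpair} expresses $\beta'_N(a,K)$ as $(K/a,K;q)_N/(aq,q;q)_N$ times a sum over $\alpha'_n(a,K)$; substituting $\alpha'_n(a,K)=[(\rho_1,\rho_2)_n/(aq/\rho_1,aq/\rho_2)_n](K/k)^n\alpha_n(a,k)$ and using $(qa/K)(K/k)=qa/k$ collapses this sum to the one on the right of \eqref{wpbteq1}, with $N$-prefactor $(qk/\rho_1\rho_2,aqk/\rho_1\rho_2;q)_N/(aq,q;q)_N$. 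On the other hand, computing $\beta'_N(a,K)$ directly from \eqref{wpn1} produces a sum over $\beta_j(a,k)$ containing the ``split'' Pochhammers $(K/k;q)_{N-j}$, $(K;q)_{N+j}$, $(q;q)_{N-j}$, $(qk;q)_{N+j}$. The $(\cdot;q)_{N+j}$ split trivially as $(\cdot;q)_N(\cdot q^N;q)_j$, while \eqref{aqn-r} converts each $(\cdot;q)_{N-j}$ into a product of an $N$-only and a $j$-only Pochhammer; the two $q^{j(j+1)/2}$ factors cancel, and the residual $(-q^{N+1})^j$ factors combine with the chain's $(K/k)^j$ to yield $q^j$. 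Reading off $Kq^N = kaq^{N+1}/(\rho_1\rho_2)$, $q^{1-N}k/K = \rho_1\rho_2 q^{-N}/a$, $K\rho_1/a = qk/\rho_2$, and $K\rho_2/a = qk/\rho_1$, one recognizes the resulting sum as the LHS of \eqref{wpbteq1} up to a second $N$-prefactor. Equating the two expressions for $\beta'_N(a,K)$ and cancelling the overall prefactors yields \eqref{wpbteq1}.

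Equation \eqref{wpbteq2} is proved analogously using \eqref{wpn2}: set $K = qa^2/k$ so that the chain's input $qa^2/K = k$ matches the theorem's pair. The chain's definition of $\tilde\beta_N(a,K)$ contains only $(qa^2/k^2;q)_{N-j}$ and $(q;q)_{N-j}$; applying \eqref{aqn-r} to both, cancelling the $q^{j(j+1)/2}$ factors, and combining the residual $(-q^{N+1})^j$ with $(qa^2/k^2)^j$ to give $q^j$, reduces the sum to $(qa^2/k^2;q)_N/(q;q)_N$ times the LHS of \eqref{wpbteq2}. The terminating form of \eqref{WPpair} applied to the new pair, together with the explicit form of $\tilde\alpha_n$ (noting $qa/K=k/a$, so $(qa/K)^n\tilde\alpha_n(a,K)=(k)_{2n}/(qa^2/k)_{2n}\cdot(qa/k)^n\alpha_n(a,k)$), produces $(qa/k,qa^2/k;q)_N/(aq,q;q)_N$ times the RHS of \eqref{wpbteq2}. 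Equating and cancelling prefactors gives the stated identity. The principal delicacy in both arguments is the Pochhammer bookkeeping — tracking $q$-powers and signs when splitting the ``reversed'' Pochhammers $(\cdot;q)_{N-j}$ via \eqref{aqn-r}, and observing that the $(-q^{N+1})^j$ contributions conspire with the chains' explicit weights to produce the clean $q^j$ factor appearing in the theorem — but no input beyond the chains \eqref{wpn1}, \eqref{wpn2} and the elementary identity \eqref{aqn-r} is required.
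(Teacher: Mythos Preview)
Your proof is correct and follows essentially the same approach as the paper's: compute $\beta'_N$ (respectively $\tilde\beta_N$) of the chained pair two ways---once from the defining relation \eqref{WPpair} using the explicit $\alpha'_n$ (respectively $\tilde\alpha_n$), and once from the chain formula \eqref{wpn1} (respectively \eqref{wpn2}) using the original $\beta_j$---then equate, applying \eqref{aqn-r} to split the reversed Pochhammers and renaming the chain's input parameter to $k$. Your write-up simply makes explicit the Pochhammer bookkeeping (in particular the cancellation of the $q^{j(j+1)/2}$ factors and the conspiracy producing $q^j$) that the paper leaves implicit.
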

\begin{proof}
The identity at \eqref{wpbteq1} follows from \eqref{wpn1}, after
substituting for $\alpha'_{n}(a,k)$ in \eqref{WPpair}, employing
\eqref{aqn-r}, then setting the two expressions for
$\beta'_{n}(a,k)$ equal, and finally replacing $c$ with $k$. The
identity at \eqref{wpbteq2} follows from \eqref{wpn2}, after setting
$k=q a^2/c$, using \eqref{aqn-r}, and finally replacing $c$ with
$k$.
\end{proof}

For later use we also note the following corollary, which is
immediate upon letting $N\to \infty$.
\begin{corollary}
If $(\alpha_n(a,k),\beta_n(a,k))$ satisfy
\begin{equation*} \beta_{n}(a,k) = \sum_{j=0}^{n}
\frac{(k/a)_{n-j}(k)_{n+j}}{(q)_{n-j}(aq)_{n+j}}\alpha_{j}(a,k),
\end{equation*}
then
\begin{multline}\label{wpbteq1b}
\sum_{n=0}^{\infty} \frac{(1-kq^{2n})(\rho_1,\rho_2;q)_n }{(1-k)(k
q/\rho_1,k q/\rho_2;q)_n}\,\left(\frac{a q}{\rho_1 \rho_2}\right)^n
\beta_n(a,k)=\\\frac{(k q,k q/\rho_1\rho_2,a q/\rho_1,a
q/\rho_2;q)_{\infty}}{(k q/\rho_1,k q/\rho_2,a q/\rho_1 \rho_2,a
q;q)_{\infty}}
\sum_{n=0}^{\infty} \frac{(\rho_1,\rho_2;q)_{n}} {(a q/\rho_1,a
q/\rho_2;q)_{n}}\left ( \frac{a q}{\rho_1 \rho_2}\right)^n
\alpha_n(a,k),
\end{multline}
and
\begin{multline}\label{wpbteq2b}
\sum_{n=0}^{\infty}  \left(\frac{q a^2}{k^2}\right)^n \beta_n(a,k)\\
= \frac{(qa/k,qa^2/k;q)_{\infty}} {(q a,qa^2/k^2;q)_{\infty}}
\sum_{n=0}^{\infty} \frac{(k;q)_{2n}} {(q a^2/k;q)_{2n}}\left (
\frac{q a^2}{k^2}\right)^n \alpha_n(a,k).
\end{multline}
\end{corollary}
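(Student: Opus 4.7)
The plan is to obtain both identities of the corollary by letting $N\to\infty$ term by term in \eqref{wpbteq1} and \eqref{wpbteq2}. Because $(q^{-N};q)_n$ annihilates every summand with $n>N$, the sums in Theorem~\ref{wpbt} stabilise as $N$ grows, so I need only verify that each $N$-dependent factor collapses to the expected $N$-free factor of \eqref{wpbteq1b} and \eqref{wpbteq2b}.

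The whole calculation is controlled by two elementary asymptotics, valid for fixed $n\ge 0$ when $|q|<1$:
\begin{align*}
(xq^{N+1};q)_n &\longrightarrow 1,\\
(xq^{-N};q)_n &\sim (-x)^n q^{-Nn+n(n-1)/2},
\end{align*}
both read off the product form $(y;q)_n=\prod_{j=0}^{n-1}(1-yq^j)$. For \eqref{wpbteq1}, applying the first asymptotic to $(kaq^{N+1}/\rho_1\rho_2;q)_n$, $(kq^{N+1};q)_n$, and $(aq^{N+1};q)_n$, and the second to the two $q^{-N}$-dependent symbols on each side, the $q^{-Nn+n(n-1)/2}$ powers cancel and the surviving constants combine as follows: on the left the ratio $(q^{-N};q)_n/(\rho_1\rho_2 q^{-N}/a;q)_n$ tends to $(a/\rho_1\rho_2)^n$, which together with the explicit $q^n$ in \eqref{wpbteq1} produces the $(aq/\rho_1\rho_2)^n$ factor of \eqref{wpbteq1b}; on the right the ratio $(q^{-N};q)_n/(\rho_1\rho_2 q^{-N}/k;q)_n$ tends to $(k/\rho_1\rho_2)^n$, which together with the explicit $(aq/k)^n$ again produces $(aq/\rho_1\rho_2)^n$. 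The four finite $q$-products that sit in front of the right-hand sum simply pass to their $(\,\cdot\,;q)_\infty$ counterparts.

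The passage from \eqref{wpbteq2} to \eqref{wpbteq2b} is strictly analogous: on the left, $(q^{-N};q)_n/(k^2 q^{-N}/a^2;q)_n\to(a^2/k^2)^n$, which with $q^n$ yields $(qa^2/k^2)^n$; on the right, $(q^{N+1}a^2/k;q)_n$ and $(aq^{N+1};q)_n$ both tend to $1$, while $(q^{-N};q)_n/(kq^{-N}/a;q)_n\to(a/k)^n$ combines with $(aq/k)^n$ to give the same $(qa^2/k^2)^n$. The only place where a slip would be easy is the bookkeeping that causes the $q^{-Nn+n(n-1)/2}$ tails to cancel exactly between each pair of $q^{-N}$-dependent symbols; once this cancellation is tracked, there is nothing further to do.
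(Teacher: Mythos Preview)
Your proof is correct and follows exactly the paper's approach: the paper simply states that the corollary ``is immediate upon letting $N\to\infty$'' in \eqref{wpbteq1} and \eqref{wpbteq2}, and you have filled in precisely those termwise limit computations. The tracking of the $(xq^{-N};q)_n$ asymptotics and the resulting cancellations is accurate.
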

Remark: At several places throughout the paper we replace $a$ with
$x/q$, $\rho_1$ with $y$ and  $\rho_2$ with $z$, in order to more
easily make comparisons with a transformation due to Bailey
\eqref{Baileyeq} later.

We now consider some applications of the following corollary.

\begin{corollary}\label{f6c1}
Let $N$ be a positive integer. Suppose the sequences $\{\alpha_n\}$
and $\{ \beta_n\}$ are related by
{\allowdisplaybreaks\begin{align}\label{abcons1}
 \beta_n
&=\sum_{r=0}^{n}\alpha_{r}.
\end{align}
} Then
\begin{multline}\label{abcons1eq}
\sum_{n=0}^{N}
\frac{(q\sqrt{k},-q\sqrt{k},y,z,k^2q^{N}/yz,q^{-N};q)_n}
{(\sqrt{k},-\sqrt{k},qk/y,qk/z,yzq^{1-N}/k,kq^{1+N};q)_n}\,q^n\,\beta_n\\
= \frac{(1-k/y)(1-k/z)(1-kq^N)(1-kq^N/yz)}
{(1-k)(1-k/yz)(1-kq^N/y)(1-kq^N/z)}\\
\times \sum_{n=0}^{N} \frac{(y,z, k^2q^{N}/yz,q^{-N};q)_{n}}
{(k/y,k/z,kq^{N}, yzq^{-N}/k;q)_{n}} \alpha_n.
\end{multline}
\end{corollary}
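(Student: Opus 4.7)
The plan is to derive the corollary as a direct specialization of equation \eqref{wpbteq1} in Theorem \ref{wpbt}, taking $k_{\text{thm}} = aq$ (equivalently $a = k/q$ where $k$ now denotes the free parameter appearing in the corollary), together with the cosmetic substitutions $\rho_1 = y$, $\rho_2 = z$ indicated in the remark following \eqref{wpbteq2b}. The observation that makes the specialization possible is that when $k_{\text{thm}}/a_{\text{thm}} = q$, the kernel of the WP-Bailey summation collapses to $1$:
\begin{equation*}
\frac{(k/a;q)_{n-j}\,(k;q)_{n+j}}{(q;q)_{n-j}\,(aq;q)_{n+j}} \;=\; \frac{(q;q)_{n-j}\,(aq;q)_{n+j}}{(q;q)_{n-j}\,(aq;q)_{n+j}} \;=\; 1,
\end{equation*}
so that \eqref{wpbtabcons} becomes precisely the hypothesis \eqref{abcons1} of the corollary, with the dependence on the single remaining parameter suppressed.

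Next I would rewrite both sides of \eqref{wpbteq1} under this specialization. On the left, the very-well-poised factor $(1-k_{\text{thm}}q^{2n})/(1-k_{\text{thm}}) = (1-kq^{1+2n})/(1-kq)$ can equivalently be displayed as
\begin{equation*}
\frac{(q\sqrt{k},-q\sqrt{k};q)_n}{(\sqrt{k},-\sqrt{k};q)_n},
\end{equation*}
using $(x,-x;q)_n = (x^2;q^2)_n$, which matches the form stated in the corollary. The remaining Pochhammers rewrite routinely: $k_{\text{thm}}\, a\, q^{N+1}/\rho_1\rho_2 = k^2 q^N/yz$, $\rho_1\rho_2 q^{-N}/a = yz\,q^{1-N}/k$, and $aq/k_{\text{thm}} = 1$, so the $(aq/k)^n$ factor on the right-hand side disappears.

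The remaining step is the simplification of the prefactor. The theorem gives
\begin{equation*}
\frac{(k_{\text{thm}}q,\,k_{\text{thm}}q/\rho_1\rho_2,\,aq/\rho_1,\,aq/\rho_2;q)_N}{(k_{\text{thm}}q/\rho_1,\,k_{\text{thm}}q/\rho_2,\,aq/\rho_1\rho_2,\,aq;q)_N} = \frac{(kq,\,kq/yz,\,k/y,\,k/z;q)_N}{(kq/y,\,kq/z,\,k/yz,\,k;q)_N}.
\end{equation*}
Using the elementary identities $(kq;q)_N/(k;q)_N = (1-kq^N)/(1-k)$ (and similarly for $yz$ in place of $1$), together with the reciprocal pair $(k/y;q)_N/(kq/y;q)_N = (1-k/y)/(1-kq^N/y)$ (and likewise for $z$), this telescopes to the four-factor expression
\begin{equation*}
\frac{(1-k/y)(1-k/z)(1-kq^N)(1-kq^N/yz)}{(1-k)(1-k/yz)(1-kq^N/y)(1-kq^N/z)}
\end{equation*}
displayed in \eqref{abcons1eq}. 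Nothing in this derivation is delicate; the only point requiring a moment of attention is recognizing that the WP-Bailey kernel trivializes precisely when $k_{\text{thm}} = aq$, which is what makes the ``simple'' pair $\beta_n = \sum_{r=0}^n \alpha_r$ fit into the WP-Bailey framework at all.
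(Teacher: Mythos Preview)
Your proof is correct and follows exactly the paper's approach: specialize \eqref{wpbteq1} of Theorem \ref{wpbt} by taking $a=k/q$, $\rho_1=y$, $\rho_2=z$, so that the WP-Bailey kernel in \eqref{wpbtabcons} collapses and the hypothesis becomes $\beta_n=\sum_{r\le n}\alpha_r$. One harmless slip: since your $k_{\text{thm}}$ equals the corollary's $k$, the very-well-poised factor is $(1-kq^{2n})/(1-k)$, not $(1-kq^{1+2n})/(1-kq)$; your subsequent identification with $(q\sqrt{k},-q\sqrt{k};q)_n/(\sqrt{k},-\sqrt{k};q)_n$ is nonetheless correct.
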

\begin{proof} Let $\rho_1 = y$, $\rho_2=z$ and $a=k/q$  in
\eqref{wpbtabcons} and \eqref{wpbteq1} in  Theorem \ref{wpbt}.
\end{proof}

 It is interesting
that the simple condition relating $\alpha_n$ and $\beta_n$ at
\eqref{abcons1} (we might call such pairs ``simple" Bailey pairs)
should have several non-trivial consequences. We had previously
discovered (in \cite{MZ07}) the result that follows from Corollary
\ref{f6c1} upon letting $N \to \infty$ (found by another method),
so in fact many of the results in \cite{MZ07} has a finite
counterpart. We give some examples.

\begin{corollary}\label{f6c1-1}
Let $N$ be a positive integer.  Then
\begin{multline}\label{c1-1eq}
\sum_{n=0}^{\lfloor N/2 \rfloor}
\frac{(q\sqrt{k},-q\sqrt{k},y,z,k^2q^{N}/yz,q^{-N};q)_{2n}q^{2n}}
{(\sqrt{k},-\sqrt{k},qk/y,qk/z,yzq^{1-N}/k,kq^{1+N};q)_{2n}}\\
= \frac{(1-k/y)(1-k/z)(1-kq^N)(1-kq^N/yz)}
{(1-k)(1-k/yz)(1-kq^N/y)(1-kq^N/z)}\\
\times \sum_{n=0}^{N} \frac{(y,z, k^2q^{N}/yz,q^{-N};q)_{n}}
{(k/y,k/z,kq^{N}, yzq^{-N}/k;q)_{n}} (-1)^n;
\end{multline}
\begin{multline}\label{c1-1eq1}
\sum_{n=0}^{\lfloor N/2 \rfloor}
\frac{(1-yq^{4n+1})(y,-\sqrt{y}q^{N+1},q^{-N};q)_{2n}q^{2n}}
{(1-y q)(q^2,-\sqrt{y}q^{1-N},yq^{2+N};q)_{2n}}\\
= \frac{(1-q)(1+\sqrt{y})}
{(1-q^{N+1})(1+\sqrt{y}q^N)}\frac{(y q^2,-q;q)_N}{(q\sqrt{y},-1/\sqrt{y};q)_N}\,y^{-N/2}.
\end{multline}
\end{corollary}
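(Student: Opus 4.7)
My plan is to obtain both identities by direct specialization of Corollary \ref{f6c1}. For \eqref{c1-1eq}, the key observation is that the simple Bailey condition $\beta_n = \sum_{r=0}^n \alpha_r$ admits the trivial solution $\alpha_n = (-1)^n$, whose partial sums give $\beta_n = 1$ for even $n$ and $\beta_n = 0$ for odd $n$. Substituting this pair into \eqref{abcons1eq} collapses the left-hand sum to its even-indexed terms $n = 2m$, $m = 0, 1, \dots, \lfloor N/2 \rfloor$, while the right-hand side simply inherits the factor $(-1)^n$, so \eqref{c1-1eq} follows immediately.

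For \eqref{c1-1eq1}, I would further specialize \eqref{c1-1eq} by setting $k = yq$ and $z = -\sqrt{y}\,q^{N+1}$. The choice $k = yq$ converts the leading factor
$(q\sqrt{k},-q\sqrt{k};q)_{2n}/(\sqrt{k},-\sqrt{k};q)_{2n} = (1 - kq^{4n})/(1 - k)$
into $(1 - yq^{4n+1})/(1 - yq)$. Under the further specialization $z = -\sqrt{y}\,q^{N+1}$, a direct computation gives $k^2q^N/yz = yzq^{1-N}/k = -\sqrt{y}\,q$, so the Pochhammer $(-\sqrt{y}\,q;q)_{2n}$ appears identically in numerator and denominator of the LHS and cancels; the surviving factors are precisely those on the LHS of \eqref{c1-1eq1}. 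The prefactor of \eqref{c1-1eq} then reduces to an explicit rational function of $y$, $\sqrt{y}$, $q^N$, while the remaining terminating $\alpha$-sum becomes
\[
 S_N \;:=\; \sum_{n=0}^{N}\frac{(y,-\sqrt{y}\,q^{N+1},-\sqrt{y}\,q,q^{-N};q)_n}{(q,-\sqrt{y}\,q^{-N},yq^{N+1},-\sqrt{y};q)_n}(-1)^n.
\]
It remains to show that $S_N$ times the explicit prefactor equals the product on the right of \eqref{c1-1eq1}.

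The main obstacle is therefore to evaluate the terminating ${}_4\phi_3$ series $S_N$ (argument $-1$) in closed form. It is neither a balanced ${}_4\phi_3$ at argument $q$ nor a very-well-poised ${}_6\phi_5$, so the Jackson or Bailey summations do not apply verbatim; however, the same hidden symmetry $k^2q^N/yz = yzq^{1-N}/k = -\sqrt{y}\,q$ that drove the cancellation on the LHS strongly suggests that $S_N$ is an instance of a terminating $q$-analog of a Kummer- or Whipple-type summation. I expect to proceed either by matching $S_N$ to such a tabulated identity (the natural candidates being the terminating forms in Gasper--Rahman's list of ${}_4\phi_3$ summations at argument $\pm 1$), or, failing that, by verifying the claimed product form inductively on $N$: showing that $S_N$ and the proposed closed form satisfy the same first-order $q$-recursion in $N$ and agree at $N=0$, where both sides reduce to $1$.
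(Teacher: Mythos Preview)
Your treatment of \eqref{c1-1eq} is exactly the paper's: set $\alpha_n=(-1)^n$, so that $\beta_n$ is the indicator of even $n$, and read off the identity from Corollary~\ref{f6c1}.

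For \eqref{c1-1eq1} your route is also essentially the paper's, with one small difference and one gap. The paper specializes $k=yq$ and $z=-q\sqrt{y}$, whereas you take $z=-\sqrt{y}\,q^{N+1}$. These are equivalent: \eqref{c1-1eq} is symmetric under $z\leftrightarrow k^2q^{N}/yz$, and with $k=yq$ that symmetry interchanges $-q\sqrt{y}$ and $-\sqrt{y}\,q^{N+1}$. So your reduction to the sum $S_N$ is correct and matches the paper's reduction.

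The gap is in the evaluation of $S_N$. You say it is ``neither a balanced ${}_4\phi_3$ at argument $q$ nor a very-well-poised ${}_6\phi_5$'' and propose a table search or an induction on $N$. In fact $S_N$ is precisely a terminating $q$-Dixon sum: writing
\[
\frac{(-\sqrt{y}\,q;q)_n}{(-\sqrt{y};q)_n}=\frac{1+\sqrt{y}\,q^{\,n}}{1+\sqrt{y}},
\]
one sees that $S_N$ is the ${}_4\phi_3$ of the Dixon type
\[
{}_4\phi_3\!\left[\begin{matrix} a,\ -q\sqrt{a},\ b,\ c\\[1pt] -\sqrt{a},\ aq/b,\ aq/c\end{matrix};\,q,\ \frac{q\sqrt{a}}{bc}\right]
\]
with $a=y$, $b=-\sqrt{y}\,q^{N+1}$, $c=q^{-N}$; indeed $aq/b=-\sqrt{y}\,q^{-N}$, $aq/c=yq^{N+1}$, and $q\sqrt{a}/(bc)=-1$, matching your $S_N$ parameter by parameter. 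The paper simply quotes the finite $q$-Dixon sum ((II.14) in Gasper--Rahman) to evaluate it in closed product form, then uses a standard reversal identity ((I.9) there) to tidy two of the resulting $q$-products into the form stated in \eqref{c1-1eq1}. So no induction or ad~hoc search is needed; once you recognize the Dixon structure, the proof is complete.
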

\begin{proof} Let $\alpha_n=(-1)^n$ in Corollary \ref{f6c1} to get \eqref{c1-1eq}. The identity at \eqref{c1-1eq1} follows from \eqref{c1-1eq}, after setting $k=qy$ and $z=-q\sqrt{y}$, using the finite form of the $q$-Dixon sum ((II.14) on page 355 of \cite{GR04}) to sum the series on the right side, and finally using the identity at (I.9) on page 351 of \cite{GR04} to rearrange two of the $q$-products in this sum.
\end{proof}
Remark: The left side in \eqref{c1-1eq} above could be expressed
 as
\[
_{12}\,W_{11}
\left(k;y,yq,z,zq,\frac{k^2q^{N}}{yz},\frac{k^2q^{1+N}}{yz},q^{-N},q^{1-N},q^2;q^2;q^2\right),
\]
where the additional $q$-products are inserted here to give the
series the form of a $_{r+1}\,W_{r}$ series. The identity at
\eqref{c1-1eq}, which can thus be regarded as a transformation
between a $_{12}\,\phi_{11}$ series with base $q^2$ and a
$_5\phi_4$ series with base $q$, does not appear to be a special
case of other similar transformations due to Bailey (which involve
$_{12}\,\phi_{11}$ series with base $q$ - see \cite{GR04}, pages
46--47) or Jain and Verma \cite{JV80} (which involve
$_{12}\,\phi_{11}$ series with base $q$ or $q^3$).

Recall that a $\,_{r+1}W_{r}$  series is defined by
\[
_{r+1} W_{r}(a_1;a_4,\dots a_{r+1};q,z):=\,_{r+1}\phi_{r} \left [
\begin{matrix}
a_{1},\,q\sqrt{a_1},\,-q\sqrt{a_1},\,a_4,\,\dots, \, a_{r+1}\\
\sqrt{a_1},\,-\sqrt{a_1},\, \frac{a_1 q}{a_4},\,\dots , \frac{a_1
q}{a_{r+1}}
\end{matrix}
; q, z \right ],
\]
where, as usual, an $_{r+1} \phi _{r}$ basic hypergeometric series
is defined by {\allowdisplaybreaks
\begin{equation*} _{r+1} \phi _{r} \left [
\begin{matrix}
a_{1}, a_{2}, \dots, a_{r+1}\\
b_{1}, \dots, b_{r}
\end{matrix}
; q,x \right ] =  \sum_{n=0}^{\infty}
\frac{(a_{1};q)_{n}(a_{2};q)_{n}\dots (a_{r+1};q)_{n}}
{(q;q)_{n}(b_{1};q)_{n}\dots (b_{r};q)_{n}} x^{n}.
\end{equation*}
}

\begin{corollary}\label{f6c11}
Let $N$ be a positive integer.  Then {\allowdisplaybreaks
\begin{multline}\label{cor4eq1}
\sum_{n=0}^{N}
\frac{(q\sqrt{k},-q\sqrt{k},y,z,k^2q^{N}/yz,q^{-N};q)_n  (n+1)\, q^n
}
{(\sqrt{k},-\sqrt{k},qk/y,qk/z,yzq^{1-N}/k,kq^{1+N};q)_n}\\
= \frac{(1-k/y)(1-k/z)(1-kq^N)(1-kq^N/yz)}
{(1-k)(1-k/yz)(1-kq^N/y)(1-kq^N/z)}\\
\times \sum_{n=0}^{N} \frac{(y,z, k^2q^{N}/yz,q^{-N};q)_{n}}
{(k/y,k/z,kq^{N}, yzq^{-N}/k;q)_{n}}.
\end{multline}
}
\end{corollary}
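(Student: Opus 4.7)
The plan is immediate once we recognize the role of the ``simple'' WP-Bailey pair condition $\beta_n = \sum_{r=0}^n \alpha_r$ in Corollary \ref{f6c1}. The left-hand side of \eqref{cor4eq1} differs from the left-hand side of \eqref{abcons1eq} only in the presence of the factor $(n+1)$ in place of $\beta_n$, while the right-hand sides agree up to replacing the summand $\alpha_n$ by $1$. This suggests the choice $\alpha_n = 1$ for all $n \geq 0$.

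First I would set $\alpha_n = 1$ in \eqref{abcons1} to obtain
\[
\beta_n = \sum_{r=0}^{n} 1 = n+1.
\]
Then I would substitute this pair $(\alpha_n, \beta_n) = (1, n+1)$ directly into the transformation \eqref{abcons1eq} of Corollary \ref{f6c1}. On the left-hand side, $\beta_n$ becomes $n+1$, producing exactly the summand on the left of \eqref{cor4eq1}. On the right-hand side, $\alpha_n$ becomes $1$, producing the sum on the right of \eqref{cor4eq1}. The prefactor involving $(1-k/y)(1-k/z)(1-kq^N)(1-kq^N/yz)/[(1-k)(1-k/yz)(1-kq^N/y)(1-kq^N/z)]$ is unchanged.

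There is no real obstacle here; the identity is an immediate specialization, entirely analogous to the derivation of \eqref{c1-1eq} from Corollary \ref{f6c1} via the choice $\alpha_n = (-1)^n$. Unlike that case, however, there appears to be no closed-form evaluation of either side via a classical summation such as the $q$-Dixon sum, so the statement is left as a transformation rather than being further simplified.
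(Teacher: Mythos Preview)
Your proposal is correct and matches the paper's own proof exactly: the paper simply states ``Let $\alpha_n=1$ in Corollary \ref{f6c1},'' which is precisely your argument, since then $\beta_n=\sum_{r=0}^n 1=n+1$. Your additional remark contrasting this with the $\alpha_n=(-1)^n$ case is accurate but not part of the paper's proof.
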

\begin{proof} Let $\alpha_n=1$ in Corollary \ref{f6c1}.
\end{proof}

\begin{corollary}\label{f6c4}
\begin{multline}\label{f8phi71}
_{8} \phi _{7} \left [
\begin{matrix}
q\sqrt{k},-q\sqrt{k}, y,z, a q,b q,k^2q^{N}/yz,q^{-N}\\
\sqrt{k},-\sqrt{k}, qk/y,qk/z,(a+b-1)q,yzq^{1-N}/k,kq^{1+N}
\end{matrix}
; q,q \right ]\\
= \frac{(1-k/y)(1-k/z)(1-kq^N)(1-kq^N/yz)}
{(1-k)(1-k/yz)(1-kq^N/y)(1-kq^N/z)}\\ \times \;_{6}\phi_{5} \left
[\begin{matrix}
y, z, a, b,k^2q^{N}/yz,q^{-N}\\
k/y,k/z, (a+b-1)q,kq^{N}, yzq^{-N}/k
\end{matrix}
; q,q^2\right ].
\end{multline}
\end{corollary}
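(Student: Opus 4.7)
The plan is to apply Corollary~\ref{f6c1} to a specially chosen simple Bailey pair $(\alpha_n,\beta_n)$ tailored so that the two sums in \eqref{abcons1eq} become exactly the $_8\phi_7$ and $_6\phi_5$ series on the two sides of \eqref{f8phi71}. Comparing the summands term by term (the Pochhammer symbols $(q\sqrt k, -q\sqrt k, y, z, k^2q^N/yz, q^{-N};q)_n$ and their denominators on the $\beta$-side, and $(y,z,k^2q^N/yz,q^{-N};q)_n$ with its denominator on the $\alpha$-side, match automatically) forces the identification
\[
\beta_n \;=\; \frac{(aq,bq;q)_n}{(q;q)_n\,\bigl((a+b-1)q;q\bigr)_n}, \qquad \alpha_n \;=\; \frac{(a,b;q)_n}{(q;q)_n\,\bigl((a+b-1)q;q\bigr)_n}\,q^{2n}.
\]
The extra factors $(aq,bq;q)_n/((a+b-1)q;q)_n$ and $(a,b;q)_n/((a+b-1)q;q)_n$ account for the new parameters in the two series, the common $(q;q)_n$ turns the $\alpha,\beta$ sums into the standard $_{r+1}\phi_r$ form, and the factor $q^{2n}$ in $\alpha_n$ absorbs the argument $q^2$ of the $_6\phi_5$ (noting that on the right side of \eqref{abcons1eq} the summand carries no extra power of $q$ beyond that already hidden in $\alpha_n$).

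Once the pair is identified, the only nontrivial step is to verify the hypothesis $\beta_n=\sum_{r=0}^n\alpha_r$ of Corollary~\ref{f6c1}. I would do this by induction: the base case $\beta_0=1=\alpha_0$ is immediate, and for the inductive step it is enough to show $\beta_n-\beta_{n-1}=\alpha_n$ for $n\ge 1$. Factoring the common ratio $\frac{(aq,bq;q)_{n-1}}{(q;q)_{n-1}\,((a+b-1)q;q)_{n-1}}$ out of $\beta_n-\beta_{n-1}$ reduces the claim to the elementary polynomial identity
\[
(1-aq^n)(1-bq^n)-(1-q^n)\bigl(1-(a+b-1)q^n\bigr)\;=\;(1-a)(1-b)q^{2n},
\]
and the factorisations $(a;q)_n=(1-a)(aq;q)_{n-1}$ and $(b;q)_n=(1-b)(bq;q)_{n-1}$ then turn the right-hand side into precisely the expression for $\alpha_n$.

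With the simple-pair condition confirmed, substituting these $(\alpha_n,\beta_n)$ into \eqref{abcons1eq} and repackaging the Pochhammer symbols and powers of $q$ in standard basic-hypergeometric notation yields \eqref{f8phi71}; the prefactor on the right already has the required form. The only step that requires any ingenuity is guessing the correct $(\alpha_n,\beta_n)$; the telescoping verification and the final rewriting are routine bookkeeping.
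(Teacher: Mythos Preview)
Your proposal is correct and is essentially the paper's own proof: both choose $\beta_n=\dfrac{(aq,bq;q)_n}{((a+b-1)q,q;q)_n}$, identify $\alpha_n=\beta_n-\beta_{n-1}=\dfrac{(a,b;q)_n\,q^{2n}}{((a+b-1)q,q;q)_n}$ via the elementary factorisation you wrote out, and then substitute into Corollary~\ref{f6c1}. The only cosmetic difference is that the paper defines $\alpha_n$ as the difference first and simplifies, while you posit the closed forms and verify the telescoping; the content is identical.
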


\begin{proof}
In Corollary \ref{f6c1}, define $\alpha_0 =1$, and for $n>0$,
\begin{equation*}
 \alpha_n =
 \frac{(aq,bq;q)_n}{((a+b-1)q,q;q)_n}-\frac{(aq,bq;q)_{n-1}}{((a+b-1)q,q;q)_{n-1}}
 =\frac{(a,b;q)_n  q^{2n}}{((a+b-1)q,q;q)_n}.
 \end{equation*}
 The sum for $\beta_n$ is easily seen to telescope to give
 \[
 \beta_n =\frac{(aq,bq;q)_n}{((a+b-1)q,q;q)_n}.
 \]
and the result follows.
\end{proof}

\begin{corollary}\label{f6c5}
{\allowdisplaybreaks
\begin{multline}\label{f8phi72}
_{8} \phi _{7} \left [
\begin{matrix}
q\sqrt{k},-q\sqrt{k}, y,z, a q,b q,k^2q^{N}/yz,q^{-N}\\
\sqrt{k},-\sqrt{k}, qk/y,qk/z,a b q,yzq^{1-N}/k,kq^{1+N}
\end{matrix}
; q,q \right ]\\
= \frac{(1-k/y)(1-k/z)(1-kq^N)(1-kq^N/yz)}
{(1-k)(1-k/yz)(1-kq^N/y)(1-kq^N/z)}\\ \times \;_{6}\phi_{5} \left
[\begin{matrix}
y, z, a, b,k^2q^{N}/yz,q^{-N}\\
k/y,k/z, a b q,kq^{N}, yzq^{-N}/k
\end{matrix}
; q,q\right ].
\end{multline}
}
\end{corollary}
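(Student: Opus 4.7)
The plan is to imitate the proof of Corollary \ref{f6c4} by constructing a ``simple'' WP-Bailey pair $(\alpha_n, \beta_n)$ satisfying \eqref{abcons1} and then invoking Corollary \ref{f6c1}. To make the left-hand side of \eqref{abcons1eq} reproduce the $_8\phi_7$ on the left of \eqref{f8phi72}, one is forced to take
\[
\beta_n = \frac{(aq, bq; q)_n}{(abq, q; q)_n},
\]
since this introduces exactly the extra numerator pair $aq, bq$ and denominator pair $abq, q$ needed to upgrade the $_6\phi_5$-type left side of \eqref{abcons1eq} into the claimed $_8\phi_7$.

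Since $\beta_n = \sum_{r=0}^n \alpha_r$ must hold, I define $\alpha_0 = 1$ and $\alpha_n = \beta_n - \beta_{n-1}$ for $n \geq 1$. The one non-routine step is simplifying this difference into a closed form suitable for substitution into the right side of \eqref{abcons1eq}. Pulling out the common factor $(aq, bq; q)_{n-1}/(abq, q; q)_{n-1}$, one is left with the bracket
\[
\frac{(1 - aq^n)(1 - bq^n) - (1 - abq^n)(1 - q^n)}{(1 - abq^n)(1 - q^n)},
\]
and a short expansion shows the numerator equals $q^n(1-a)(1-b)$. Absorbing $(1-a)$ and $(1-b)$ back into the $q$-Pochhammer symbols produces
\[
\alpha_n = \frac{(a, b; q)_n \, q^n}{(abq, q; q)_n},
\]
a formula which is also valid at $n=0$.

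Substituting this pair into \eqref{abcons1eq} then closes the argument. On the left, the factor $q^n \beta_n$ adjoins numerator parameters $(aq, bq; q)_n$ and denominator parameters $(abq, q; q)_n$ to the summand, producing exactly the $_8\phi_7$ series on the left of \eqref{f8phi72}. On the right, $\alpha_n$ adjoins $(a, b; q)_n$ in the numerator, $(abq, q; q)_n$ in the denominator, and an extra factor $q^n$, converting the given sum into the $_6\phi_5$ with argument $q$ on the right of \eqref{f8phi72}. The only real ``obstacle'' is spotting the algebraic identity $(1-aq^n)(1-bq^n) - (1-abq^n)(1-q^n) = q^n(1-a)(1-b)$; everything else is formal bookkeeping parallel to the proof of Corollary \ref{f6c4}, the multiplicative analogue of the additive denominator $(a+b-1)q$ there.
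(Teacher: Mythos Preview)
Your proof is correct and follows exactly the same approach as the paper's own proof: you use Corollary~\ref{f6c1} with $\beta_n=(aq,bq;q)_n/(abq,q;q)_n$ and $\alpha_n=\beta_n-\beta_{n-1}=(a,b;q)_n\,q^n/(abq,q;q)_n$. The paper simply states the telescoping computation in one line, while you spell out the factorization $(1-aq^n)(1-bq^n)-(1-abq^n)(1-q^n)=q^n(1-a)(1-b)$, but the argument is identical.
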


\begin{proof}
This time, in Corollary \ref{f6c1}, define $\alpha_0 =1$, and for
$n>0$,
\begin{equation*}
 \alpha_n =
 \frac{(aq,bq;q)_n}{(a b q,q;q)_n}-\frac{(aq,bq;q)_{n-1}}{(a b q,q;q)_{n-1}}
 =\frac{(a,b;q)_n  q^{n}}{(a bq,q;q)_n}.
 \end{equation*}
 The  result follows as above.
\end{proof}

\begin{corollary}\label{f6c111}
Let $N$ and $m$ be  positive integers and let $p$ be an integer.
Then {\allowdisplaybreaks
\begin{multline}\label{cor7eq1}
\sum_{n=0}^{N}
\frac{(q\sqrt{k},-q\sqrt{k},y,z,k^2q^{N}/yz,q^{-N};q)_nq^{(mn^2+(p+2)n)/2}
}
{(\sqrt{k},-\sqrt{k},qk/y,qk/z,yzq^{1-N}/k,kq^{1+N};q)_n(-q^{(m+p)/2};q^{m})_n}\\
= \frac{(1-k/y)(1-k/z)(1-kq^N)(1-kq^N/yz)}
{(1-k)(1-k/yz)(1-kq^N/y)(1-kq^N/z)}\\
\times \left(1-q^{(m-p)/2}\sum_{n=1}^{N} \frac{(y,z,
k^2q^{N}/yz,q^{-N};q)_{n}q^{(mn^2+(p-2m)n)/2}} {(k/y,k/z,kq^{N},
yzq^{-N}/k;q)_{n}(-q^{(m+p)/2};q^{m})_n} \right).
\end{multline}
}
\end{corollary}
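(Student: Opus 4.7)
The plan is to apply Corollary \ref{f6c1} exactly as in the proofs of Corollaries \ref{f6c4} and \ref{f6c5}, choosing the $\alpha_n$ so that the partial sums $\beta_n=\sum_{r=0}^n\alpha_r$ match the $q$-shifted factorial structure that appears on the left side of \eqref{cor7eq1}. Comparing \eqref{cor7eq1} with \eqref{abcons1eq}, the factor $q^n\beta_n$ in the latter must equal $q^{(mn^{2}+(p+2)n)/2}/(-q^{(m+p)/2};q^{m})_n$, so I would set
\[
\beta_n \;=\; \frac{q^{(mn^{2}+pn)/2}}{(-q^{(m+p)/2};q^{m})_n}, \qquad \alpha_0 = \beta_0 = 1, \qquad \alpha_n = \beta_n - \beta_{n-1}\ (n\geq 1).
\]
By construction $\sum_{r=0}^{n}\alpha_r = \beta_n$ telescopes, so the hypothesis \eqref{abcons1} of Corollary \ref{f6c1} holds automatically.

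The only real computation is the simplification of $\alpha_n$ for $n\ge 1$. Writing $m(n-1)^{2}+p(n-1)=mn^{2}+pn-2mn+m-p$ and using
\[
\frac{(-q^{(m+p)/2};q^{m})_n}{(-q^{(m+p)/2};q^{m})_{n-1}} \;=\; 1 + q^{mn-(m-p)/2},
\]
one gets, after factoring out $q^{(mn^{2}+pn)/2}/(-q^{(m+p)/2};q^{m})_n$, a bracketed expression
\[
1 - q^{-mn+(m-p)/2}\bigl(1+q^{mn-(m-p)/2}\bigr) \;=\; -\,q^{-mn+(m-p)/2}.
\]
Thus for $n\ge 1$,
\[
\alpha_n \;=\; -\,q^{(m-p)/2}\,\frac{q^{(mn^{2}+(p-2m)n)/2}}{(-q^{(m+p)/2};q^{m})_n},
\]
which is precisely the $n$-th summand (apart from the WP-Bailey weights $(y,z,k^{2}q^{N}/yz,q^{-N};q)_n/(k/y,k/z,kq^{N},yzq^{-N}/k;q)_n$) appearing under the summation on the right side of \eqref{cor7eq1}.

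Finally I would substitute these $\alpha_n$ and $\beta_n$ into \eqref{abcons1eq}: the left side of \eqref{abcons1eq} reproduces the left side of \eqref{cor7eq1}, and on the right side, splitting off the $n=0$ contribution (which equals $1$, since $\alpha_0=1$) produces the leading $1$ in parentheses, while the remaining terms $n=1,\dots,N$ assemble into $-q^{(m-p)/2}$ times the sum on the right of \eqref{cor7eq1}. No step is really an obstacle; the content is the guess for $\beta_n$, after which everything is forced by the telescoping construction and the identity in Corollary \ref{f6c1}. The only place where care is required is in tracking the exponents of $q$ during the simplification of $\alpha_n$, so I would do that calculation explicitly in the write-up.
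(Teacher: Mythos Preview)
Your proposal is correct and follows exactly the same approach as the paper: define $\beta_n=q^{(mn^2+pn)/2}/(-q^{(m+p)/2};q^m)_n$, set $\alpha_n=\beta_n-\beta_{n-1}$, simplify $\alpha_n$ to $-q^{(m-p)/2}q^{(mn^2+(p-2m)n)/2}/(-q^{(m+p)/2};q^m)_n$, and then plug into Corollary~\ref{f6c1}. Your write-up actually supplies more detail on the exponent bookkeeping than the paper does.
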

\begin{proof} In Corollary \ref{f6c1} set $\alpha_0=1$ and, for $n>0$,
\[
\alpha_{n}=\frac{q^{(m n^2+p
n)/2}}{(-q^{(m+p)/2};q^m)_n}-\frac{q^{(m (n-1)^2+p
(n-1))/2}}{(-q^{(m+p)/2};q^m)_{n-1}} =-q^{(m-p)/2}\frac{q^{(m
n^2+(p-2m) n)/2}}{(-q^{(m+p)/2};q^m)_n}.
\]
\end{proof}

\begin{corollary}\label{pb2}
Let $P$, $p$, $Q$, $q$, $R$, $a$, $b$, $c$, $k$, $y$ and $z$ be
complex numbers such that none of the denominators below vanish.
Then {\allowdisplaybreaks
\begin{multline}\label{poly2} \sum_{n=0}^{N}
\frac{\left(q\sqrt{k},-q\sqrt{k},y,z,\frac{k^2q^{N}}{yz},q^{-N};q\right)_n}
{\left(\sqrt{k},-\sqrt{k},\frac{qk}{y},\frac{qk}{z},
\frac{yzq^{1-N}}{k},kq^{1+N};q\right)_n}\\
\times \frac{ (ap^2;p^2)_n \left(b P^2;P^2 \right)_n  (cR^2;R^2)_n
\left(\frac{a Q^2}{b c};Q^2 \right)_n  \,q^{n}} {
\left(\frac{PQR}{p};\frac{PQR}{p}\right)_n \left(\frac{a p P Q}{c
R};\frac{p P Q}{R}\right)_n  \left(\frac{a
pQR}{bP};\frac{pQR}{P}\right)_n \left(\frac{b c p P R}{Q};\frac{ p P
R}{Q}\right)_n}
\\
=
\frac{\left(1-\frac{k}{y}\right)\left(1-\frac{k}{z}\right)(1-kq^N)\left(1-\frac{kq^N}{yz}\right)}
{(1-k)\left(1-\frac{k}{yz}\right)\left(1-\frac{kq^N}{y}\right)\left(1-\frac{kq^N}{z}\right)}
\sum_{n=0}^{N} \frac{\left(y,z,
\frac{k^2q^{N}}{yz},q^{-N};q\right)_{n}}
{\left(\frac{k}{y},\frac{k}{z},kq^{N}, \frac{yzq^{-N}}{k};q\right)_{n}}\\
\times \frac{ \left(1-a p^n P^n Q^n R^n\right)\left(1-b \frac{p^n
P^n}{ Q^{n} R^{n}}\right) \left(1-\frac{ P^n Q^n
  }{c p^{n} R^{n}}\right) \left(1-\frac{a p^n
   Q^n }{b cP^{n}R^{n}}\right)}{(1-a) (1-b)
   \left(1-\frac{1}{c}\right) \left(1-\frac{a}{b c}\right)}
\\
\times
 \frac{ (a;p^2)_n
\left(b ;P^2 \right)_n  (c;R^2)_n \left(\frac{a }{b c};Q^2
\right)_n\, R^{2n} } { \left(\frac{PQR}{p};\frac{PQR}{p}\right)_n
\left(\frac{a p P Q}{c R};\frac{p P Q}{R}\right)_n \left(\frac{a
pQR}{bP};\frac{pQR}{P}\right)_n \left(\frac{b c p P R}{Q};\frac{ p P
R}{Q}\right)_n};
\end{multline}
} {\allowdisplaybreaks\begin{multline}\label{poly2q} \sum_{n=0}^{N}
\frac{\left(q\sqrt{k},-q\sqrt{k},y,z,\frac{k^2q^{N}}{yz},q^{-N};q\right)_n}
{\left(\sqrt{k},-\sqrt{k},\frac{qk}{y},\frac{qk}{z},\frac{yzq^{1-N}}{k},kq^{1+N};q\right)_n}
\frac{ (aq^m,bq^m,cq^m,\frac{a q^m}{b c};q^m)_n } {
\left(\frac{a}{c}q^m,\frac{a}{b}q^m,b cq^m,q^m;q^m\right)_n }\,q^{n}
\\
=
\frac{\left(1-\frac{k}{y}\right)\left(1-\frac{k}{z}\right)(1-kq^N)\left(1-\frac{kq^N}{yz}\right)}
{(1-k)\left(1-\frac{k}{yz}\right)\left(1-\frac{kq^N}{y}\right)\left(1-\frac{kq^N}{z}\right)}
\times\\ \sum_{n=0}^{N} \frac{\left(y,z,
\frac{k^2q^{N}}{yz},q^{-N};q\right)_{n}}
{\left(\frac{k}{y},\frac{k}{z},kq^{N},
\frac{yzq^{-N}}{k};q\right)_{n}}\frac{
(q^m\sqrt{a},-q^m\sqrt{a},a,b,c,\frac{a }{b c};q^m)_n \, q^{mn}} {
\left(\sqrt{a},-\sqrt{a},\frac{a}{c}q^m,\frac{a}{b}q^m,b
cq^m,q^m;q^m\right)_n }.
\end{multline}}
\end{corollary}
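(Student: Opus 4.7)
The strategy follows the pattern of the previous three corollaries: apply Corollary \ref{f6c1} with a carefully chosen sequence $\alpha_n$ so that $\beta_n = \sum_{r=0}^{n}\alpha_r$ telescopes to a prescribed product. For \eqref{poly2}, the plan is to set
$$\beta_n = \frac{(ap^2;p^2)_n (bP^2;P^2)_n (cR^2;R^2)_n (aQ^2/(bc);Q^2)_n}{(PQR/p;PQR/p)_n (apPQ/(cR);pPQ/R)_n (apQR/(bP);pQR/P)_n (bcpPR/Q;pPR/Q)_n},$$
which is precisely the multi-base Pochhammer expression appearing in the summand on the left of \eqref{poly2}. With $\alpha_0 = 1 = \beta_0$ and $\alpha_n = \beta_n - \beta_{n-1}$ for $n \ge 1$, the relation \eqref{abcons1} holds automatically, and Corollary \ref{f6c1} immediately reproduces the left side of \eqref{poly2}. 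The whole content of the proof is then to recognise $\alpha_n = \beta_n - \beta_{n-1}$ in the factored form displayed on the right of \eqref{poly2}.

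Factoring out $\beta_{n-1}$ and using the elementary relations $(a;p^2)_n = (1-a)(ap^2;p^2)_{n-1}$ (and its three analogues for the numerator Pochhammer symbols), together with the one-step shifts $(PQR/p;PQR/p)_n = (1-(PQR/p)^n)(PQR/p;PQR/p)_{n-1}$ (and the analogues for the three other denominator symbols), the identity reduces to the purely algebraic relation
\begin{multline*}
(1-ap^{2n})(1-bP^{2n})(1-cR^{2n})(1-aQ^{2n}/(bc))\\
-(1-P^nQ^nR^n/p^n)(1-ap^nP^nQ^n/(cR^n))(1-ap^nQ^nR^n/(bP^n))(1-bcp^nP^nR^n/Q^n)\\
=-cR^{2n}(1-ap^nP^nQ^nR^n)(1-bp^nP^n/(Q^nR^n))(1-P^nQ^n/(cp^nR^n))(1-ap^nQ^n/(bcP^nR^n)),
\end{multline*}
the sign rearrangement $-c/(1-c) = 1/(1-1/c)$ accounting for the $(1-1/c)$ appearing in the denominator of the target expression.

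The identity \eqref{poly2q} is then obtained from \eqref{poly2} by the single specialisation $p = P = Q = R = q^{m/2}$. Under it, every multi-base Pochhammer symbol in $\beta_n$ collapses to a $(q^m;q^m)$-shifted factorial, and the four numerator factors on the right collapse to $(1-aq^{2mn})(1-b)(1-1/c)(1-a/(bc))$; three of these cancel against $(1-a)(1-b)(1-1/c)(1-a/(bc))$, and the surviving factor $(1-aq^{2mn})/(1-a)$ is precisely $(q^m\sqrt{a},-q^m\sqrt{a};q^m)_n/(\sqrt{a},-\sqrt{a};q^m)_n$, which is the very-well-poised shape seen on the right of \eqref{poly2q}.

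The main obstacle is the multi-base algebraic identity displayed above. It is a Laurent polynomial identity in the seven variables $p^n,P^n,Q^n,R^n,a,b,c$; after clearing the negative powers of $p^n,P^n,Q^n,R^n$ both sides become ordinary polynomials of bounded degree, and verification is a matter of direct expansion and coefficient comparison. A helpful prelude is to first check the one-variable specialisation $p = P = Q = R$ (writing $x$ for their common $n$th power), namely
$$(1-ax)(1-bx)(1-cx)(1-ax/(bc))-(1-ax/c)(1-ax/b)(1-bcx)(1-x)=x(1-ax^2)(1-b)(1-c)(1-a/(bc)),$$
whose verification is short: the elementary symmetric polynomials of degrees $2$ and $4$ in the root sets $\{a,b,c,a/(bc)\}$ and $\{a/c,a/b,bc,1\}$ coincide, which forces the $x^0$, $x^2$ and $x^4$ coefficients of the left-hand difference to vanish, and the remaining $x^1$ and $x^3$ coefficients match the right-hand side by a direct computation.
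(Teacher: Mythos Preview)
Your proof is correct and follows essentially the same route as the paper: both apply Corollary~\ref{f6c1} with $\beta_n$ equal to the multi-base product and $\alpha_n$ its first difference, and both obtain \eqref{poly2q} from \eqref{poly2} by the specialisation $p=P=Q=R=q^{m/2}$. The only difference is that the paper obtains the factored form of $\alpha_n=\beta_n-\beta_{n-1}$ by citing the telescoping identity (2.2) of Subbarao and Verma \cite{SV99} (with $m=0$, $d=1$), whereas you re-derive that identity from scratch via the displayed Laurent-polynomial equation; your one-variable check is a sanity test rather than the full verification, but since you correctly note that the general identity reduces to a finite polynomial expansion, this is not a gap.
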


\begin{proof}
We use the special case $m=0$, $d=1$ of the identity of Subbarao and
Verma labeled (2.2) in \cite{SV99}, namely,
{\allowdisplaybreaks\begin{multline} \sum_{k=0}^{n}  \frac{
\left(1-a p^k P^k Q^k R^k\right)\left(1-b \frac{p^k P^k}{ Q^{k}
R^{k}}\right) \left(1-\frac{ P^k Q^k
  }{c p^{k} R^{k}}\right) \left(1-\frac{a p^k
   Q^k }{b cP^{k}R^{k}}\right)}{(1-a) (1-b)
   \left(1-\frac{1}{c}\right) \left(1-\frac{a}{b c}\right)}
\\
\times
 \frac{ (a;p^2)_k
\left(b ;P^2 \right)_k  } {
\left(\frac{PQR}{p};\frac{PQR}{p}\right)_k \left(\frac{a p P Q}{c
R};\frac{p P Q}{R}\right)_k}
 \frac{ (c;R^2)_k
\left(\frac{a }{b c};Q^2 \right)_k } { \left(\frac{a
pQR}{bP};\frac{pQR}{P}\right)_k \left(\frac{b c p P R}{Q};\frac{ p P
R}{Q}\right)_k}\, R^{2k}\\
= \frac{ (ap^2;p^2)_n \left(b P^2;P^2 \right)_n (cR^2;R^2)_n
\left(\frac{a Q^2}{b c};Q^2 \right)_n  } {
\left(\frac{PQR}{p};\frac{PQR}{p}\right)_n \left(\frac{a p P Q}{c
R};\frac{p P Q}{R}\right)_n\left(\frac{a
pQR}{bP};\frac{pQR}{P}\right)_n \left(\frac{b c p P R}{Q};\frac{ p P
R}{Q}\right)_n},
\end{multline}}
and then, in \eqref{abcons1eq} above, let $\alpha_i$ be the $i$-th
term in the sum above, and let $\beta_n$ be the quantity on the
right side above.

The identity at \eqref{poly2q} follows upon setting
$P=Q=p=R=q^{m/2}$ and simplifying.
\end{proof}

Apart from the first chain of Andrews, there is only one other
WP-Bailey chain, of those seven alluded to in the introduction, that
leads to a non-trivial transformation similar to that in Corollary
\ref{f6c1} (the consequences of letting $k=aq$ in the other chains
being entirely trivial). This is the third chain of Warnaar (Theorem
2.5 in \cite{W03}). This chain implies that if
$(\alpha_{n}(a,k),\,\beta_{n}(a,k))$ satisfy \eqref{WPpair}, then
{\allowdisplaybreaks
\begin{multline}\label{War3}
\sum_{j=0}^n\frac{1+a q^{2j}}{1+a}\frac{(m/a;q^2)_{n-j}(a
m;q^2)_{n+j}}{(q^2;q^2)_{n-j}(
a^2q^2;q^2)_{n+j}}q^{-j}\alpha_j(a,m)\\
=q^{-n}\frac{(-m q;q)_{2n}}{(-a;q)_{2n}}\\ \times \sum_{j=0}^n
\frac{1-m q^{2j}}{1-m}\frac{(a/m;q^2)_{n-j}(a
m;q^2)_{n+j}}{(q^2;q^2)_{n-j}(m^2q^2;q^2)_{n+j}}\left(
\frac{m}{a}\right)^{n-j}\beta_j(a,m).
\end{multline}
} Upon setting $m=a q$, we get that if
$\beta_n=\sum_{j=0}^n\alpha_j$, then
\begin{multline}\label{War3triv}
\sum_{j=0}^n\frac{1+a q^{2j}}{1+a}\frac{(q;q^2)_{n-j}(a^2
q;q^2)_{n+j}}{(q^2;q^2)_{n-j}(
a^2q^2;q^2)_{n+j}}q^{-j}\alpha_j\\
=\frac{(1+a q^{2n})(1+a q^{2n+1})}{(1+a)(1+a q)}
\sum_{j=0}^n\frac{1-aq^{2j+1}}{1-a q}\frac{(1/q;q^2)_{n-j}(a^2
q;q^2)_{n+j}}{(q^2;q^2)_{n-j}(a^2q^4;q^2)_{n+j}} q^{-j}\beta_j.
\end{multline}

However, we do not pursue the consequences of this transformation
further here.



\section{New WP-Bailey Pairs}
We next exhibit two new WP-Bailey pairs.

\begin{lemma}\label{l1}
The pair $(\alpha_n^{(1)}(a,k),\beta_n^{(1)}(a,k))$ is a WP-Bailey
pair, where
\begin{align}\label{mz01}
\alpha_n^{(1)}(a,k)&=\frac{(q a^2/k^2;q)_n}{(q;q)_n}\left(
\frac{k}{a}\right)^n,\\
\beta_n^{(1)}(a,k)&=\frac{(q
a/k,k;q)_n}{(k^2/a,q;q)_n}\frac{(k^2/a;q)_{2n}}{(a q;q)_{2n}}.\notag
\end{align}
\end{lemma}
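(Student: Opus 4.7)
The plan is to verify the WP-Bailey defining relation in its second form, namely
\[
\beta_n(a,k) = \frac{(k/a,k;q)_n}{(aq,q;q)_n}\sum_{j=0}^{n}\frac{(q^{-n})_j(kq^n)_j}{(aq^{1-n}/k)_j(aq^{n+1})_j}\left(\frac{qa}{k}\right)^j\alpha_j(a,k),
\]
by substituting $\alpha_j^{(1)}(a,k)=(qa^2/k^2;q)_j(k/a)^j/(q;q)_j$ and recognizing the resulting sum as a terminating balanced $_3\phi_2$.

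First I would pull out the prefactor. Since $(qa/k)^j(k/a)^j=q^j$, the powers of $k/a$ and $qa/k$ coming from $\alpha_j^{(1)}$ and the chain combine to a clean $q^j$, and the right-hand side reduces to
\[
\frac{(k/a,k;q)_n}{(aq,q;q)_n}\;{}_3\phi_2\!\left[\begin{matrix} q^{-n},\,kq^n,\,qa^2/k^2 \\ aq^{1-n}/k,\,aq^{n+1}\end{matrix};q,q\right].
\]
Next I would check that this $_3\phi_2$ is Saalschützian. With the identification $a_{\mathrm{S}}=kq^n$, $b_{\mathrm{S}}=qa^2/k^2$, $c_{\mathrm{S}}=aq^{n+1}$ in the $q$-Saalschütz sum, a direct computation gives $a_{\mathrm S}b_{\mathrm S}q^{1-n}/c_{\mathrm S}=aq^{1-n}/k$, exactly the remaining lower parameter, so balance holds.

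Applying the $q$-Saalschütz formula
\[
{}_3\phi_2\!\left[\begin{matrix} a,\,b,\,q^{-n} \\ c,\,ab q^{1-n}/c\end{matrix};q,q\right]=\frac{(c/a,c/b;q)_n}{(c,c/ab;q)_n}
\]
yields the closed form $(aq/k,k^2q^n/a;q)_n/\bigl((aq^{n+1},k/a;q)_n\bigr)$. Then using the splittings $(aq;q)_{2n}=(aq;q)_n(aq^{n+1};q)_n$ and $(k^2/a;q)_{2n}=(k^2/a;q)_n(k^2q^n/a;q)_n$, the factors of $(k/a;q)_n$ and $(aq;q)_n$ cancel against the prefactor and we recover exactly
\[
\beta_n^{(1)}(a,k)=\frac{(qa/k,k;q)_n}{(k^2/a,q;q)_n}\frac{(k^2/a;q)_{2n}}{(aq;q)_{2n}}.
\]

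The only real obstacle is spotting that the sum collapses to a balanced $_3\phi_2$; once balance is checked, the rest is a short but careful bookkeeping exercise with doubled-length $q$-shifted factorials. I would do the calculation once by hand to confirm, then present it in the proof as: substitute, identify the balanced $_3\phi_2$, apply $q$-Saalschütz, and simplify.
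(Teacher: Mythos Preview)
Your proposal is correct and follows essentially the same route as the paper: substitute $\alpha_j^{(1)}$ into the second form of \eqref{WPpair}, recognize the resulting sum as a balanced ${}_3\phi_2$, apply the $q$-Pfaff--Saalsch\"utz sum \eqref{qpfsaal}, and then regroup using $(aq;q)_{2n}=(aq;q)_n(aq^{n+1};q)_n$ and $(k^2/a;q)_{2n}=(k^2/a;q)_n(k^2q^n/a;q)_n$. The only difference is that you spell out the balance check and the doubling identities explicitly, which the paper leaves implicit.
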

\begin{proof}
{\allowdisplaybreaks
\begin{align*}
\sum_{j=0}^{n}
\frac{(k/a)_{n-j}(k)_{n+j}}{(q)_{n-j}(aq)_{n+j}}&\alpha_j^{(1)}(a,k)\\
&= \frac{(k/a,k;q)_n}{(aq,q;q)_n}\sum_{j=0}^{n}
\frac{(q^{-n},kq^n;q)_{j}}{(aq^{1-n}/k,aq^{n+1};q)_{j}}
\left(\frac{qa}{k}\right)^j\alpha_j^{(1)}(a,k)\\
&= \frac{(k/a,k;q)_n}{(aq,q;q)_n}\sum_{j=0}^{n} \frac{(q^{-n},kq^n,q
a^2/k^2;q)_{j}}{(aq^{1-n}/k,aq^{n+1},q;q)_{j}}\,q^j\\
&= \frac{(k/a,k;q)_n}{(aq,q;q)_n}\frac{(a q/k,k^2q^n/a;q)_n}{(aq^{n+1},k/a;q)_n}\\
&=\frac{(q a/k,k;q)_n}{(k^2/a,q,q)_n}\frac{(k^2/a;q)_{2n}}{(a
q,q)_{2n}}=\beta_n^{(1)}(a,k).
\end{align*}
} The third equality follows from the $q$-Pfaff-Saalsch\"{u}tz
sum,
\begin{equation}\label{qpfsaal}
 _{3} \phi _{2} \left [
\begin{matrix}
a, b,  q^{-n}\\
c, a b q^{1-n}/c
\end{matrix}
; q,q \right ] = \frac{(c/a,c/b;q)_n}{(c,c/ab;q)_n}.
\end{equation}
\end{proof}

\begin{lemma}\label{l3}
The pair $(\alpha_n^{(2)}(a,q),\beta_n^{(2)}(a,q))$ is a WP-Bailey
pair, where {\allowdisplaybreaks
\begin{align}\label{mz03}
\alpha_n^{(2)}(a,q)&=\frac{(a, \,q \sqrt{a},\, -q \sqrt{a}, \,d,\,
q/d, \, -a;q)_n} {(\sqrt{a},\,-\sqrt{a},\, aq/d,\,a d,\,
-q,\,q;q)_n}\,(-1)^n,\\
\beta_n^{(2)}(a,q)&=\begin{cases}
\displaystyle{\frac{(q^2/ad,dq/a;q^2)_{n/2}}{(adq,aq^2/d;q^2)_{n/2}}}, & n \text{ even},\\
\displaystyle{-a\frac{(q/ad,d/a;q^2)_{(n+1)/2}}{(ad,aq/d;q^2)_{(n+1)/2}}},&
n \text{ odd}.
\end{cases}\notag
\end{align}
}
\end{lemma}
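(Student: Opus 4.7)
My strategy mirrors the proof of Lemma~\ref{l1}: substitute the proposed $\alpha_n^{(2)}(a,q)$ into the second (Saalsch\"utzian) form of the WP-Bailey relation \eqref{WPpair} with $k=q$, recognise the resulting series, and evaluate it. Since $\alpha_n^{(2)}(a,q)$ is already a VWP summand with base parameter $a$ and upper parameters $d,\,q/d,\,-a$, multiplying by the kernel $(q^{-n},q^{n+1};q)_j/(aq^{-n},aq^{n+1};q)_j\cdot a^{j}$ produces a terminating very-well-poised
\[
{}_{8}W_{7}\!\left(a;\,d,\,q/d,\,-a,\,q^{-n},\,q^{n+1};\,q,\,-a\right),
\]
multiplied by the prefactor $(q/a,q;q)_n/(aq,q;q)_n$. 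A direct check confirms the balancing identity $-a=a^{2}q^{n+2}/(d\cdot(q/d)\cdot(-a)\cdot q^{-n}\cdot q^{n+1})$, placing this $_{8}W_{7}$ in the Watson-summable family.

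Applying Watson's transformation \cite[(III.18)]{GR04} then reduces the $_{8}W_{7}$ to a balanced, terminating
\[
{}_{4}\phi_{3}\!\left[\begin{matrix}q^{-n},\,-a,\,q^{n+1},\,a\\ aq/d,\,ad,\,-q\end{matrix};\,q,\,q\right].
\]
Using \eqref{aqn-r} (or, equivalently, the direct computations $(-q^{-n};q)_n=q^{-n(n+1)/2}(-q;q)_n$ and $(aq^{-n};q)_n=(-a)^{n}q^{-n(n+1)/2}(q/a;q)_n$), the combined prefactor from Watson's transformation and from the WP-Bailey relation collapses to $(-a)^{-n}$. The problem thereby reduces to establishing
\[
{}_{4}\phi_{3}\!\left[\begin{matrix}q^{-n},\,-a,\,q^{n+1},\,a\\ aq/d,\,ad,\,-q\end{matrix};\,q,\,q\right]=(-a)^{n}\,\beta_n^{(2)}(a,q),
\]
the right-hand side splitting by parity of $n$ into the two closed forms listed in \eqref{mz03}.

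To evaluate the $_{4}\phi_{3}$ I would exploit the symmetric pairs $(a,-a)$ in the numerator and $(q,-q)$ in the denominator (where the $q$ is implicit in the factorial Pochhammer). Using $(a;q)_j(-a;q)_j=(a^{2};q^{2})_j$ and $(q;q)_j(-q;q)_j=(q^{2};q^{2})_j$, the series rewrites as
\[
\sum_{j=0}^{n}\frac{(q^{-n},q^{n+1};q)_j\,(a^{2};q^{2})_j}{(q^{2};q^{2})_j\,(aq/d,ad;q)_j}\,q^{j},
\]
a hybrid $q$/$q^{2}$ series. Splitting this by the parity of $j$ and applying the standard shift $(x;q)_{2k}=(x;q^{2})_k(xq;q^{2})_k$ should reduce each parity class to a balanced $_{3}\phi_{2}$ in base $q^{2}$, summable by the $q^{2}$-Pfaff--Saalsch\"utz identity \eqref{qpfsaal}. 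Which parity class carries the surviving product is controlled by whether $(q^{-n};q^{2})_k$ or $(q^{-n+1};q^{2})_k$ truncates first, and this is exactly what selects the even- or odd-$n$ form of $\beta_n^{(2)}$.

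The main obstacle is this parity split: one has to verify that, after regrouping, each residual $_{3}\phi_{2}$ is genuinely Saalsch\"utzian in base $q^{2}$ with the correct balance, and that only one class survives non-trivially for each parity of $n$. Should the bookkeeping prove unwieldy, an alternative is to invoke a direct quadratic $_{8}W_{7}$ transformation from \cite[\S 3.10]{GR04} that maps a terminating $_{8}W_{7}$ with argument $-a$ onto a $q^{2}$-based balanced series. Either route culminates in a routine match against the two cases of \eqref{mz03}, requiring only the elementary simplification $(x;q)_{2m}=(x;q^{2})_m(xq;q^{2})_m$.
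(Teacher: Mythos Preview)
Your setup coincides with the paper's: both substitute $\alpha_j^{(2)}$ into the second form of \eqref{WPpair} with $k=q$ and arrive at the terminating very-well-poised ${}_8W_7(a;d,q/d,-a,q^{-n},q^{n+1};q,-a)$ with the prefactor $(q/a;q)_n/(aq;q)_n$. From here the paper is more direct: it evaluates this ${}_8W_7$ in a single stroke by invoking the $q$-analogue of Whipple's ${}_3F_2$ sum \eqref{whq3f2} (with $C\to a$, $a\to q^{-n}$), which immediately yields the two-case product that is $\beta_n^{(2)}$. Your route through Watson's transformation is correctly executed up to the balanced ${}_4\phi_3$ and the collapse of the prefactor to $(-a)^{-n}$, but at that point you have only shifted the problem: the ${}_4\phi_3$ you must evaluate \emph{is} the ${}_4\phi_3$ form of the $q$-Whipple sum, so citing that closes the argument---and the Watson step was then an unnecessary detour.

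The proposed parity-in-$j$ evaluation of that ${}_4\phi_3$ does not work as described. After combining $(a;q)_j(-a;q)_j=(a^2;q^2)_j$ and $(q;q)_j(-q;q)_j=(q^2;q^2)_j$, the factors $(q^{-n},q^{n+1};q)_j$ and $(aq/d,ad;q)_j$ remain in base $q$; splitting $j=2k$ or $j=2k+1$ then produces, for each parity class, five numerator and four denominator $q^2$-Pochhammers (from the base-$q$ factors) together with base-$q^4$ factors coming from $(a^2;q^2)_{2k}$ and $(q^2;q^2)_{2k}$---not a ${}_3\phi_2$, and not Saalsch\"utzian. Nor does one parity class vanish for a given parity of $n$: for $n=2$, say, the odd-$j$ contribution is the single nonzero $j=1$ term. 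The fallback you mention (a quadratic transformation from \cite[\S3.10]{GR04}) can be made to work, but the cleanest fix is to drop the Watson step and appeal directly to \eqref{whq3f2}, exactly as the paper does.
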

Remark: Note that this pair is restricted in that it is necessary to
set $k=q$ for \eqref{WPpair} to hold.
\begin{proof}
{\allowdisplaybreaks
\begin{align*}
&\sum_{j=0}^{n}
\frac{(q/a)_{n-j}(q)_{n+j}}{(q)_{n-j}(aq)_{n+j}}\alpha_j^{(2)}(a,q)\\
&= \frac{(q/a;q)_n}{(aq;q)_n}\sum_{j=0}^{n}
\frac{(q^{-n},q^{n+1};q)_{j}}{(aq^{-n},aq^{n+1};q)_{j}}
a^j\alpha_j^{(2)}(a,q)\\
&= \frac{(q/a;q)_n}{(aq;q)_n}\sum_{j=0}^{n}
\frac{(q^{-n},q^{n+1},a,q \sqrt{a},-q
\sqrt{a},d,\frac{q}{d},-a;q)_{j}}
{(aq^{-n},aq^{n+1},\sqrt{a},-\sqrt{a},\frac{q
a}{d},a d,-q,q;q)_{j}}\left(-a\right)^j\\
&= \frac{(q/a;q)_n}{(aq;q)_n}\begin{cases} -a\displaystyle{\frac{(a
q;q)_{n}}{(q/a;q)_{n}}\frac{(q/ad,d/a;q^2)_{(n+1)/2}}{( a d,
qa/d;q^2)_{(n+1)/2}}},& n \text{
odd} \\
\displaystyle{\frac{(a
q;q)_{n}}{(q/a;q)_{n}}\frac{(q^2/ad,dq/a;q^2)_{n/2}}{( a dq,
q^2a/d;q^2)_{n/2}}},& n \text{ even},
\end{cases}\\
&=\beta_n^{(2)}(a,q).
\end{align*}
} The third equality follows from a $q$-analogue of Whipple's $_3
F_2$ sum,
\begin{multline}
\label{whq3f2} _{8} \phi _{7} \left [
\begin{matrix}
C, \,q \sqrt{C},\, -q \sqrt{C}, \,a,\, q/a,\, -C,\,d, \,
q/d\\
\sqrt{C},\,-\sqrt{C},\, C q/a,\,a C,\,-q,\,C q/d,\, C d
\end{matrix}
; q,-C  \right ]\\ = \frac{(C,C q;q)_{\infty} (a C d, a C q/d, C d
q/a,C q^2/a d;q^2)_{\infty}}{( C d, C q/d, a C, C q/a;q)_{\infty}},
\end{multline}
and upon setting $a=q^{-n}$, {\allowdisplaybreaks
\begin{multline*}
_{8} \phi _{7} \left [
\begin{matrix}
C, \,q \sqrt{C},\, -q \sqrt{C}, \,d,\, q/d,\, -C,\,q^{-n}, \,
q^{1+n}\\
\sqrt{C},\,-\sqrt{C},\, C q/d,\,d C,\,-q,\,C q^{1+n},\, C q^{-n}
\end{matrix}
; q,-C  \right ]\\ = \begin{cases}\displaystyle{\frac{( C
q;q)_{n}}{(q/C;q)_{n}}\frac{(q^2/dC,dq/C;q^2)_{n/2}}{( d C q, C
q^2/d;q^2)_{n/2}}} ,& n \text{
even} \\
\displaystyle{\frac{( C
q;q)_{n}}{(q/C;q)_{n}}\frac{(q/dC,d/C;q^2)_{(n+1)/2}}{( d C , C
q/d;q^2)_{(n+1)/2}}}(-C),& n \text{ odd}.
\end{cases}
\end{multline*}
}
\end{proof}

We had initially thought that the following WP-Bailey pair was
totally new also.

\begin{align}\label{mz02}
\alpha_n^{(3)}(a,k)&=\frac{(a, \,q \sqrt{a},\, -q \sqrt{a},
\,k/a,\,a \sqrt{q/k}, \, -a \sqrt{q/k};q)_n}
{(\sqrt{a},\,-\sqrt{a},\, q
a^2/k,\,\sqrt{qk},\, -\sqrt{qk},\,q;q)_n}\,(-1)^n,\\
\beta_n^{(3)}(a,k)&=\begin{cases}
\displaystyle{\frac{(k,k^2/a^2;q^2)_{n/2}}{(q^2, q^2
a^2/k;q^2)_{n/2}}}, & n \text{ even},\\
0,& n \text{ odd}.
\end{cases}\notag
\end{align}

However, this pair can be derived from an existing WP-Bailey pair
using a result of Warnaar \cite{W03}.
\begin{lemma}[Warnaar, \cite{W03}]\label{Wlem}
For $a$ and $k$ indeterminates the following equations are
equivalent: {\allowdisplaybreaks
\begin{align}\label{WPequiv}
\beta_{n}(a,k;q) &= \sum_{j=0}^{n}
\frac{(k/a)_{n-j}(k)_{n+j}}{(q)_{n-j}(aq)_{n+j}}\alpha_{j}(a,k;q),\\
\alpha_{n}(a,k;q) &= \frac{1-a q^{2n}}{{1-a}}\sum_{j=0}^{n}\frac{1-k
q^{2n}}{{1-k}}
\frac{(a/k)_{n-j}(a)_{n+j}}{(q)_{n-j}(kq)_{n+j}}\left(\frac{k}{a}
\right)^{n-j}\beta_{j}(a,k;q). \notag
\end{align}
}
\end{lemma}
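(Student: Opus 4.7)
The plan is to recognise the two relations as the two sides of a matrix inversion. Introduce the lower-triangular matrices $F=(F_{n,j})_{n,j\ge 0}$ and $G=(G_{n,j})_{n,j\ge 0}$ by
\[
F_{n,j}=\frac{(k/a;q)_{n-j}(k;q)_{n+j}}{(q;q)_{n-j}(aq;q)_{n+j}},
\]
\[
G_{n,j}=\frac{1-aq^{2n}}{1-a}\cdot\frac{1-kq^{2j}}{1-k}\cdot\frac{(a/k;q)_{n-j}(a;q)_{n+j}}{(q;q)_{n-j}(kq;q)_{n+j}}\left(\frac{k}{a}\right)^{n-j}
\]
for $0\le j\le n$, and zero otherwise. (I read the factor $(1-kq^{2n})/(1-k)$ appearing inside the stated second sum as $(1-kq^{2j})/(1-k)$, which is the only interpretation that depends on the summation index and produces a genuine inverse.) Viewing $(\alpha_n)$ and $(\beta_n)$ as column vectors, the first identity reads $\beta=F\alpha$ and the second reads $\alpha=G\beta$. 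Since both matrices are lower-triangular with nonvanishing diagonal entries, the equivalence of the two relations reduces to the single orthogonality
\[
\sum_{j=m}^{n} G_{n,j}\,F_{j,m} \;=\; \delta_{n,m}, \qquad 0\le m\le n.
\]

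To verify this orthogonality, the case $n=m$ is immediate from
\[
G_{n,n}F_{n,n}=\frac{1-aq^{2n}}{1-a}\cdot\frac{1-kq^{2n}}{1-k}\cdot\frac{(a;q)_{2n}(k;q)_{2n}}{(kq;q)_{2n}(aq;q)_{2n}}=1.
\]
For $n>m$ I would substitute the explicit formulas, factor out all quantities independent of $j$, and reindex $j\mapsto j+m$. Repeated application of the reversal rule \eqref{aqn-r} converts every Pochhammer symbol of the form $(x;q)_{n-j}$ and $(y;q)_{n+j}$ into one of the form $(\cdot;q)_j$, collapsing the inner sum to a terminating $_3\phi_2$. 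A short check shows the three upper and two lower parameters satisfy the Saalsch\"utz balance condition, so the $q$-Pfaff-Saalsch\"utz summation \eqref{qpfsaal} evaluates the sum as a finite product. Because $n>m$, that product contains the factor $(q^{m-n};q)_{n-m}=0$, which gives the required vanishing.

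The main obstacle is the algebraic bookkeeping: the very-well-poised prefactors $(1-aq^{2n})$ and $(1-kq^{2j})$ have to be reshuffled together with the Pochhammer rearrangements so that the resulting $_3\phi_2$ lands precisely on the Saalsch\"utz balance. A cleaner alternative that avoids redoing the computation by hand is to invoke Bressoud's well-poised matrix inversion, whose statement is exactly the orthogonality $\sum_j G_{n,j}F_{j,m}=\delta_{n,m}$ for arbitrary indeterminates $a$ and $k$; the present lemma then follows by nothing more than parameter matching.
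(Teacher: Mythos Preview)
The paper does not supply its own proof of this lemma; it is quoted verbatim from Warnaar \cite{W03} and then applied to derive Corollary~\ref{warcor}. So there is nothing in the paper to compare your argument against.

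Your approach is the standard and correct one: recast the two relations as $\beta=F\alpha$ and $\alpha=G\beta$ for lower-triangular matrices and verify $GF=I$. You are also right that the factor written as $(1-kq^{2n})/(1-k)$ inside the sum must be read as $(1-kq^{2j})/(1-k)$; with the literal $q^{2n}$ the inversion fails off the diagonal, so this is a typographical slip in the paper's statement.

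One caution on the direct computation: after the reindexing and use of \eqref{aqn-r}, the presence of the very-well-poised factor $(1-kq^{2j})$ means the inner sum is most naturally a terminating very-well-poised ${}_6\phi_5$, summable by the Rogers/Jackson formula, rather than a balanced ${}_3\phi_2$ summable by \eqref{qpfsaal}. Your fallback of invoking Bressoud's matrix inversion directly is the cleanest route and is exactly how Warnaar \cite{W03} himself obtains the result; that reference suffices as a complete proof.
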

If $a$ and $k$ are interchanged in the second equation, we see that
Lemma \ref{Wlem} implies the following.
\begin{corollary}\label{warcor}
If $(\alpha_{n}(a,k), \beta_n(a,k))$ are a WP-Bailey pair, then so
are\\ $(\alpha_{n}'(a,k), \beta_n'(a,k))$, where
\begin{align*}
\alpha_{n}'(a,k)&=\frac{1-a q^{2n}}{1-a}\left (\frac{k}{a} \right)^n
\beta_{n}(k,a),\\
\beta_n'(a,k)&= \frac{1-k}{1-k q^{2n}}\left (\frac{k}{a} \right)^n
\alpha_{n}(k,a).
\end{align*}
\end{corollary}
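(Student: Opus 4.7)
The plan is to apply Lemma \ref{Wlem} in a symmetric fashion: first use the inversion to pass from $(\alpha_n(a,k),\beta_n(a,k))$ to an ``inverse'' relation expressing $\alpha_n(a,k)$ as a sum over $\beta_j(a,k)$, then swap the two parameters $a$ and $k$ in that inverse, and finally rearrange to recover the shape of the defining WP-Bailey relation \eqref{WPpair}.

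Concretely, I would start from the second equation of Lemma \ref{Wlem}, which, upon interchanging $a$ and $k$ throughout, reads
\begin{equation*}
\alpha_n(k,a) \;=\; \frac{1-kq^{2n}}{1-k}\sum_{j=0}^{n}\frac{1-aq^{2j}}{1-a}\,\frac{(k/a;q)_{n-j}(k;q)_{n+j}}{(q;q)_{n-j}(aq;q)_{n+j}}\left(\frac{a}{k}\right)^{n-j}\beta_j(k,a).
\end{equation*}
Next I would multiply both sides by $\frac{1-k}{1-kq^{2n}}(k/a)^n$ and note that the factor $(k/a)^n$ combines with the power $(a/k)^{n-j}$ inside the sum to yield $(k/a)^j$. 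This converts the identity into
\begin{equation*}
\frac{1-k}{1-kq^{2n}}\left(\frac{k}{a}\right)^{n}\alpha_n(k,a) \;=\; \sum_{j=0}^{n}\frac{(k/a;q)_{n-j}(k;q)_{n+j}}{(q;q)_{n-j}(aq;q)_{n+j}}\left[\,\frac{1-aq^{2j}}{1-a}\left(\frac{k}{a}\right)^{j}\beta_j(k,a)\,\right].
\end{equation*}
With the definitions of $\alpha_n'(a,k)$ and $\beta_n'(a,k)$ given in the statement, the left-hand side is precisely $\beta_n'(a,k)$, and the bracketed expression inside the sum is precisely $\alpha_j'(a,k)$. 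Hence the displayed identity is exactly the WP-Bailey pair condition \eqref{WPpair} for $(\alpha_n'(a,k),\beta_n'(a,k))$, which completes the proof.

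The main step is really just the bookkeeping of interchanging $a \leftrightarrow k$ in Lemma \ref{Wlem} and carefully redistributing the prefactors $(1-aq^{2n})/(1-a)$, $(1-kq^{2n})/(1-k)$, and $(k/a)^{n-j}$ so that the post-multiplied identity takes the canonical WP-Bailey form; no nontrivial summation identity is needed, since the result is purely a formal consequence of the equivalence asserted in Lemma \ref{Wlem}.
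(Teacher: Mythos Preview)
Your proof is correct and follows exactly the same approach as the paper: the paper's entire argument is the one-line observation that interchanging $a$ and $k$ in the second equation of Lemma~\ref{Wlem} yields the corollary, and you have simply written out the details of that interchange and the redistribution of prefactors. (Incidentally, you have also silently corrected a typo in the paper's statement of Lemma~\ref{Wlem}, where the factor $\frac{1-kq^{2n}}{1-k}$ inside the sum should read $\frac{1-kq^{2j}}{1-k}$.)
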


For the present purposes, we may call the pair $(\alpha_{n}'(a,k),
\beta_n'(a,k))$ the \emph{dual} of the pair $(\alpha_{n}(a,k),
\beta_n(a,k))$. Thus, for example, our pair at \eqref{mz02} is the
dual of the WP-Bailey pair of Andrews and Berkovich in \cite{AB02}:
{\allowdisplaybreaks
\begin{align*}
\alpha_n(a,k)&=\begin{cases}
\displaystyle{\frac{(a,q^2\sqrt{a},-q^2\sqrt{a},a^2/k^2;q^2)_{n/2}}{(q^2,
\sqrt{a},-\sqrt{a}, q^2
k^2/a;q^2)_{n/2}}}\left(\frac{k}{a}\right)^n, & n \text{ even},\\
0,& n \text{ odd}.
\end{cases}\\
\beta_n(a,k)&=\frac{(k, \,k \sqrt{q/a},\, -k \sqrt{q/a}, \,a/k;q)_n}
{(\sqrt{a q},\,-\sqrt{a q},\, q
k^2/a,\,q;q)_n}\,\left(\frac{-k}{a}\right)^n.
\end{align*}
}

We had initially thought that the WP-Bailey pair in Lemma \ref{l1}
was the dual of the following pair of Andrews and Berkovich:
\begin{align}\label{ABpr2}
\alpha_n^{(4)}(a,k)&=\frac{1-a q^{2n}}{1-a}\frac{(a,k/aq;q)_n}{(q,
q^2a^2/k;q)_n}\frac{(qa^2/k;q)_{2n}}{(k;q)_{2n}}\left(\frac{k}{a}\right)^n,\\
\beta_n^{(4)}(a,q)&=\frac{(k^2/q a^2;q)_n}{(q;q)_n}. \notag
\end{align}
However, the dual of the pair in Lemma \ref{l1} is the pair
\begin{align}\label{l1pr2}
\alpha_n^{(5)}(a,k)&=\frac{1-a q^{2n}}{1-a}\frac{(a,k q/a;q)_n}{(q,
a^2/k;q)_n}\frac{(a^2/k;q)_{2n}}{(k q;q)_{2n}}\left(\frac{k}{a}\right)^n,\\
\beta_n^{(5)}(a,q)&=\frac{1-k}{1-k q^{2n}}\frac{(q k^2/
a^2;q)_n}{(q;q)_n}, \notag
\end{align}
while the dual of  the pair at \eqref{ABpr2} is the pair
\begin{align}\label{mz01pr2}
\alpha_n^{(6)}(a,k)&=\frac{1-a q^{2n}}{1-a}\frac{(a^2/q
k^2;q)_n}{(q,q)_n}\left(
\frac{k}{a}\right)^n,\\
\beta_n^{(6)}(a,k)&=\frac{( a/k
q,k;q)_n}{(k^2q^2/a,q;q)_n}\frac{(qk^2/a;q)_{2n}}{(a;q)_{2n}}.\notag
\end{align}

Inserting the new WP-Bailey pairs in any of the existing WP-Bailey
chains will lead to  transformations relating basic hypergeometric
series. We believe the following transformations of basic
hypergeometric series to be new.

\begin{corollary}\label{f6c7}
{\allowdisplaybreaks \small{
\begin{multline}\label{f6c71}
_{12} \phi _{11} \left [
\begin{matrix}
k, q\sqrt{k},-q\sqrt{k}, y, z, \displaystyle{ \frac{q a}{k}},
\frac{k}{\sqrt{a}}, - \frac{k}{\sqrt{a}},
k\sqrt{\frac{q}{a}}, -k\sqrt{\frac{q}{a}},\displaystyle{ \frac{kaq^{N+1}}{y z}}, q^{-N}\\
\sqrt{k},-\sqrt{k}, \displaystyle{ \frac{q k}{y},\frac{q k}{z},
 \frac{k^2}{a}, \sqrt{a q},- \sqrt{a q},
q\sqrt{a},-q\sqrt{a},kq^{N+1},\frac{yzq^{-N}}{a}}
\end{matrix}
; q,\,q \right ]\\
=\frac{(k q,k q/y z,a q/y,a q/z;q)_{N}} {(k q/y,k q/z,a q/y z,a
q;q)_{N}} \,  _{5} \phi _{4} \left [
\begin{matrix}
 y,\, z,\,\displaystyle{ \frac{q a^2}{k^2}},\frac{k a q^{N+1}}{y z},q^{-N}\\
\displaystyle{ \frac{q a}{y},\frac{q a}{z},a
q^{N+1},\frac{yzq^{-N}}{k}}
\end{matrix}
; q,\,q \right ].
\end{multline}
} }
\end{corollary}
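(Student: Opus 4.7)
The plan is to substitute the WP-Bailey pair $(\alpha_n^{(1)}(a,k),\beta_n^{(1)}(a,k))$ from Lemma \ref{l1} into transformation \eqref{wpbteq1} of Theorem \ref{wpbt}, after renaming $\rho_1 = y$ and $\rho_2 = z$ as suggested by the remark preceding Corollary \ref{f6c1}. Everything should then fall out after rewriting the Pochhammer symbols $(\cdot;q)_{2n}$ that arise from $\beta_n^{(1)}$ as products of ordinary $(\cdot;q)_n$ symbols.

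First I would handle the easy side. Plugging $\alpha_n^{(1)}(a,k)=\frac{(qa^2/k^2;q)_n}{(q;q)_n}(k/a)^n$ into the right side of \eqref{wpbteq1}, the $(aq/k)^n$ weight combines with $(k/a)^n$ to give just $q^n$, and the extra numerator factor $(qa^2/k^2;q)_n$ slots in alongside $(y,z,kaq^{N+1}/yz,q^{-N};q)_n$. This yields precisely the $_5\phi_4$ series on the right side of \eqref{f6c71}, multiplied by the prefactor $(kq,kq/yz,aq/y,aq/z;q)_N/(kq/y,kq/z,aq/yz,aq;q)_N$ already present in \eqref{wpbteq1}.

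The main bookkeeping is on the left side, where $\beta_n^{(1)}(a,k)=\frac{(qa/k,k;q)_n}{(k^2/a,q;q)_n}\frac{(k^2/a;q)_{2n}}{(aq;q)_{2n}}$ must be reshaped. The key elementary identity is
\begin{equation*}
(x;q)_{2n} = (\sqrt{x},-\sqrt{x},\sqrt{xq},-\sqrt{xq};q)_n,
\end{equation*}
which follows by splitting the product into odd- and even-indexed factors and applying $1-xq^{2j}=(1-\sqrt{x}q^j)(1+\sqrt{x}q^j)$. Applied to $(k^2/a;q)_{2n}$ this produces the four top parameters $k/\sqrt{a},-k/\sqrt{a},k\sqrt{q/a},-k\sqrt{q/a}$, and applied to $(aq;q)_{2n}$ it produces the four bottom parameters $\sqrt{aq},-\sqrt{aq},q\sqrt{a},-q\sqrt{a}$. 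Combining this with the standard very-well-poised rewriting $(1-kq^{2n})/(1-k)=(q\sqrt{k},-q\sqrt{k};q)_n/(\sqrt{k},-\sqrt{k};q)_n$, the factors $(qa/k,k;q)_n$ from $\beta_n^{(1)}$, and the factors $(y,z,kaq^{N+1}/yz,q^{-N};q)_n/(kq/y,kq/z,yzq^{-N}/a,kq^{N+1};q)_n$ from \eqref{wpbteq1}, one reads off twelve numerator and eleven denominator parameters (the remaining $(q;q)_n$ playing the role of the factorial) that agree exactly with the $_{12}\phi_{11}$ on the left of \eqref{f6c71}.

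I do not expect a real obstacle: no new summation or transformation formula is required, only the $(x;q)_{2n}$ splitting above and careful matching of parameters. The slightly delicate step is simply the parameter bookkeeping — verifying that the $(k^2/a;q)_n$ appearing in the denominator of $\beta_n^{(1)}$ cancels against one of the factors that would otherwise appear twice, and that the net power of $q$ per summand is exactly $q^n$, so that the left side is recognizable as a $_{12}\phi_{11}$ evaluated at argument $q$.
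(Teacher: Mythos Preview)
Your proposal is correct and matches the paper's own proof exactly: insert the pair \eqref{mz01} into \eqref{wpbteq1} with $\rho_1=y$, $\rho_2=z$, and rewrite the $(\cdot;q)_{2n}$ symbols. One small remark on your closing caveat: the factor $(k^2/a;q)_n$ in the denominator of $\beta_n^{(1)}$ does not cancel with anything---it simply survives as the bottom parameter $k^2/a$ of the $_{12}\phi_{11}$, while the four top parameters $k/\sqrt{a},-k/\sqrt{a},k\sqrt{q/a},-k\sqrt{q/a}$ all come from $(k^2/a;q)_{2n}$ in the numerator.
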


\begin{proof}
Insert the WP-Bailey pair at \eqref{mz01} into \eqref{wpbteq1} and
set $\rho_1=y$ and $\rho_2=z$.
\end{proof}

Substituting the pair at \eqref{mz01} into \eqref{wpbteq2}  leads to
the following result.
\begin{corollary}
{\allowdisplaybreaks
\begin{multline}
_{7} \phi _{6} \left [
\begin{matrix}
k,\displaystyle{\frac{q a}{k}, \frac{k }{\sqrt{a}},-\frac{k
}{\sqrt{a}},
 k\sqrt{\frac{q}{a}},-k\sqrt{\frac{q}{a}}},q^{-N}\\
\displaystyle{\frac{k^2}{a}},\sqrt{a q},-\sqrt{a q},q
\sqrt{a},-q\sqrt{a},\displaystyle{\frac{k^2q^{-N}}{a^2}}
\end{matrix}
; q,q \right ]=\\
\frac{(qa/k,q a^2/k;q)_N}{( q a,q a^2/k^2;q)_N}\, _{7} \phi _{6}
\left [
\begin{matrix}
\displaystyle{\frac{q a^2}{k^2}}, \sqrt{k}, -\sqrt{k},\sqrt{k q},
-\sqrt{k q},\,\,
\displaystyle{\frac{a^2 q^{N+1}}{k}},q^{-N}\\
\displaystyle{a\sqrt{\frac{q}{k}},-a\sqrt{\frac{q}{k}}, \frac{a
q}{\sqrt{k}},-\frac{a q}{\sqrt{k}},\frac{k q^{-N}}{a}}, a q^{N+1}
\end{matrix}
; q,q \right ].
\end{multline}
}
\end{corollary}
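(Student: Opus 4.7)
The plan is to substitute the WP-Bailey pair $(\alpha_n^{(1)}(a,k),\beta_n^{(1)}(a,k))$ of Lemma \ref{l1} into the second transformation \eqref{wpbteq2} of Theorem \ref{wpbt}, exactly as the hint preceding the statement indicates, and then rewrite each side as the advertised $_7\phi_6$. Beyond \eqref{wpbteq2} and Lemma \ref{l1}, the only tool needed is the elementary splitting
\[
(x;q)_{2n} \;=\; (\sqrt{x},-\sqrt{x},\sqrt{qx},-\sqrt{qx};q)_n,
\]
which will be applied to each of $(k^2/a;q)_{2n}$, $(aq;q)_{2n}$, $(k;q)_{2n}$ and $(qa^2/k;q)_{2n}$.

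First I would treat the left-hand side of \eqref{wpbteq2}. Writing
\[
\beta_n^{(1)}(a,k) \;=\; \frac{(qa/k,k;q)_n}{(k^2/a,q;q)_n}\,\frac{(k^2/a;q)_{2n}}{(aq;q)_{2n}}
\]
and splitting the two length-$2n$ Pochhammers introduces four additional numerator parameters $k/\sqrt{a},-k/\sqrt{a},k\sqrt{q/a},-k\sqrt{q/a}$ and four additional denominator parameters $\sqrt{aq},-\sqrt{aq},q\sqrt{a},-q\sqrt{a}$. Combined with the parameters $k$, $qa/k$, $q^{-N}$ (numerator) and $k^2/a$, $k^2q^{-N}/a^2$ (denominator) already present in \eqref{wpbteq2}, together with the $(q;q)_n$ that underlies any basic hypergeometric series, this yields precisely the $_7\phi_6$ on the left side of the corollary.

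Next I would treat the right-hand side of \eqref{wpbteq2}. Substituting $\alpha_n^{(1)}(a,k) = (qa^2/k^2;q)_n (k/a)^n/(q;q)_n$, the factor $(aq/k)^n$ in \eqref{wpbteq2} combines with $(k/a)^n$ to produce $q^n$, the correct argument. Splitting $(k;q)_{2n}$ supplies the numerator parameters $\sqrt{k},-\sqrt{k},\sqrt{kq},-\sqrt{kq}$, and splitting $(qa^2/k;q)_{2n}$ supplies the denominator parameters $a\sqrt{q/k},-a\sqrt{q/k},aq/\sqrt{k},-aq/\sqrt{k}$. Together with the parameters $q^{N+1}a^2/k$, $q^{-N}$, $qa^2/k^2$ (numerator) and $aq^{N+1}$, $q^{-N}k/a$ (denominator) already present, this is exactly the $_7\phi_6$ on the right side of the corollary. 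The scalar prefactor $(qa/k,qa^2/k;q)_N/(qa,qa^2/k^2;q)_N$ is inherited verbatim from \eqref{wpbteq2}.

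No real obstacle is anticipated: since \eqref{wpbteq2} is applied directly and the WP-Bailey property of the pair has already been verified in Lemma \ref{l1}, the proof is pure parameter bookkeeping. The one place worth a moment's care is tracking the four pairs of $\pm$-signed square-root parameters produced by the four splittings, and making sure the combined exponent of $q$ in the summand simplifies to $q^n$ on each side.
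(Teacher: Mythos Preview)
Your proposal is correct and follows exactly the route the paper takes: the paper's entire proof is the single sentence ``Substituting the pair at \eqref{mz01} into \eqref{wpbteq2} leads to the following result,'' and you have simply spelled out the bookkeeping (the quadratic splitting $(x;q)_{2n}=(\sqrt{x},-\sqrt{x},\sqrt{qx},-\sqrt{qx};q)_n$ and the cancellation $(aq/k)^n(k/a)^n=q^n$) that the paper leaves implicit.
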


\begin{corollary}
{\allowdisplaybreaks
\begin{multline}\label{id3eq1}
_{14}W_{13}\left( q;y,yq,z,zq, \frac{q^2}{ad},\frac{d
q}{a},q^2,\frac{a q^{N+2}}{yz},\frac{a q^{N+3}}{yz},
q^{-N},q^{1-N};q^2,q^2 \right)-\\
aq\frac{\left( 1 - \frac{d}{a} \right)
    \left( 1 - \frac{q}{ad} \right)
    \left( 1 - q^3 \right)
    \left( 1 - q^{-N} \right) \left( 1 - y \right)
    \left( 1 - \frac{aq^{2 + N}}{yz} \right)
    \left( 1 - z \right) }{\left( 1 - ad \right)
    \left( 1 - q \right)
    \left( 1 - \frac{aq}{d} \right)
    \left( 1 - q^{2 + N} \right)
    \left( 1 - \frac{q^2}{y} \right)
    \left( 1 - \frac{q^2}{z} \right)
    \left( 1 - \frac{yzq^{-N}}{a} \right) }\\
    \times
    \,_{14}W_{13}\left( q^3;yq,yq^2,zq,zq^3,
\frac{q^3}{ad},\frac{d q^2}{a},q^2,\frac{a q^{N+3}}{yz},\frac{a
q^{N+4}}{yz}, q^{1-N},q^{2-N};q^2,q^2 \right)\\
=\frac{\left(q^2,\frac{q^2}{y z},\frac{a q}{y},\frac{a
q}{z};q\right)_{N}} {(\frac{q^2}{y},\frac{q^2}{z},\frac{a q}{y z},a
q;q)_{N}} \,_{10}W_{9}\left( a;y,z,d, \frac{q}{d},-a,\frac{a
q^{N+2}}{yz},q^{-N};q,-a\right).
\end{multline}
} {\allowdisplaybreaks
\begin{multline}\label{cor14eq2}
 _{7} \phi _{6} \left [
\begin{matrix}
q^{5/2},\, -q^{5/2}, \,  y,\,
\displaystyle{\frac{q}{y},\,\frac{q^2}{a d}, \, \frac{d q}{a}},\, q^2\\
\\
q^{1/2},\, -q^{1/2},\,q^2 y,\, \displaystyle{\frac{q^3}{y},\,q a
d,\, \frac{q^2 a}{d}}
\end{matrix}; q^2, a^2 \right ]\\
-\frac{a^2\,\left( 1 - \frac{d}{a} \right) \,
    \left( 1 - \frac{q}{a\,d} \right) \,\left( 1 - q^3 \right) \,
    \left( 1 - \frac{q}{y} \right) \,\left( 1 - y \right) }{\left( 1 -
      a\,d \right) \,\left( 1 - q \right) \,
    \left( 1 - \frac{a\,q}{d} \right) \,
    \left( 1 - \frac{q^2}{y} \right) \,\left( 1 - q\,y \right) }\\
    \times
    \,  _{7} \phi _{6} \left [
\begin{matrix}
q^{7/2},\, -q^{7/2}, \,  y q,\,
\displaystyle{\frac{q^2}{y},\,\frac{q^3}{a d}, \, \frac{d q^2}{a}},\, q^2\\
\\
q^{3/2},\, -q^{3/2},\,q^3 y,\, \displaystyle{\frac{q^4}{y},\,q^2 a
d,\, \frac{q^3 a}{d}}
\end{matrix}; q^2, a^2 \right ]\\
= \frac{(q,q^2;q)_{\infty}(a d y, a d q/y, a y q/d, q^2 a/d y;
q^2)_{\infty}}{(a b,aq/d,y q,q^2/y;q)_{\infty}}.
\end{multline}
}
\end{corollary}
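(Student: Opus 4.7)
The plan is to insert the WP-Bailey pair $(\alpha_n^{(2)}(a,q),\beta_n^{(2)}(a,q))$ of Lemma~\ref{l3} into the transformation \eqref{wpbteq1} of Theorem~\ref{wpbt} (for \eqref{id3eq1}) and into its limiting form \eqref{wpbteq1b} (for \eqref{cor14eq2}), with $k=q$ throughout since this pair requires it. For \eqref{id3eq1} I would take $\rho_1=y$, $\rho_2=z$ in \eqref{wpbteq1}. Because $\alpha_n^{(2)}$ already has the very-well-poised shape with first parameter $a$ and free pairs $(d,q/d)$ together with the extra $-a$, substituting it on the right of \eqref{wpbteq1} yields directly the product prefactor times the $_{10}W_9(a;y,z,d,q/d,-a,aq^{N+2}/yz,q^{-N};q,-a)$ appearing on the right of \eqref{id3eq1}.

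For the left side, the piecewise form of $\beta_n^{(2)}$ forces me to split the sum by the parity of $n$. Applying the doubling identities $(X;q)_{2m}=(X,Xq;q^2)_m$ and $(X;q)_{2m+1}=(1-X)(Xq,Xq^2;q^2)_m$ to every base-$q$ Pochhammer in the summand converts each half into a base-$q^2$ series. The prefactor $(1-q^{2n+1})/(1-q)$ becomes $(1-q\cdot q^{4m})/(1-q)$ at $n=2m$, which is the VWP factor for a base-$q^2$ $W$-series of first parameter $q$; at $n=2m+1$, after extracting the $m=0$ factor $(1-q^3)/(1-q)$, the residual $(1-q^3\cdot q^{4m})/(1-q^3)$ is the VWP factor of first parameter $q^3$. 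The two halves are therefore the $_{14}W_{13}$ series in base $q^2$ appearing in \eqref{id3eq1}. The overall coefficient $-aq\,(\,\cdots\,)$ multiplying the second $_{14}W_{13}$ is just the $n=1$ contribution to the original sum: it is the product of $\beta_1^{(2)}=-a(1-q/ad)(1-d/a)/\big((1-ad)(1-aq/d)\big)$ with $q$ and with $(y,z,aq^{N+2}/yz,q^{-N};q)_1/(q^2/y,q^2/z,yzq^{-N}/a,q^{N+2};q)_1$.

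For \eqref{cor14eq2} I would use the same WP-Bailey pair in \eqref{wpbteq1b}, now specializing $\rho_1=y$ and $\rho_2=q/y$. The key observation is that with this choice the series on the right of \eqref{wpbteq1b}, after substitution of $\alpha_n^{(2)}$, becomes precisely the $_8\phi_7$ summable by the $q$-Whipple identity \eqref{whq3f2} with $C=a$ and free pairs $(y,q/y)$, $(d,q/d)$, of argument $-a$. Evaluating it and combining with the prefactor of \eqref{wpbteq1b} (in which the factors $(a,aq/y,ay;q)_\infty$ cancel) produces the infinite product on the right of \eqref{cor14eq2}. The left side is split by parity exactly as before; because the $q^{-N}$-type factors are now absent in the limit, each half is only a $_7W_6$ in base $q^2$, with argument $a^2$ coming from $(aq/\rho_1\rho_2)^n=a^n$ at $n=2m$ or $n=2m+1$ (the extra $a$ in the odd case being absorbed into the overall prefactor).

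The main obstacle will be the meticulous bookkeeping in the parity splitting: confirming that every doubled base-$q$ Pochhammer produces exactly the listed base-$q^2$ parameters (including the ``compensating'' $q^2$ numerator that replaces the absent $(q^2;q^2)_m$ factorial in each series), and verifying that each resulting series is genuinely well-poised. Concretely, for every free parameter $b$ one must check that $b\cdot\text{(matching denominator)}$ equals $q\cdot q^2$ in the even $W$-series and $q^3\cdot q^2$ in the odd one; for instance, in the odd $_{14}W_{13}$, the pair coming from the original $z$ is $(zq,zq^2)$, and $(zq^2)(q^3/z)=q^5=q^3\cdot q^2$ confirms the VWP pairing with first parameter $q^3$, and similarly for the other parameters in both \eqref{id3eq1} and \eqref{cor14eq2}.
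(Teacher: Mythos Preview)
Your proposal is correct and follows essentially the same route as the paper: insert the pair of Lemma~\ref{l3} into \eqref{wpbteq1} with $k=q$, $\rho_1=y$, $\rho_2=z$, split the $\beta$-side by parity to obtain the two base-$q^2$ very-well-poised series, and for \eqref{cor14eq2} specialize $z=q/y$, pass to the limit $N\to\infty$ (equivalently work directly with \eqref{wpbteq1b}), and sum the $\alpha$-side by \eqref{whq3f2}. The paper's proof is terse and omits the parity and doubling bookkeeping you spell out, but the method is identical.
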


\begin{proof}
Insert the WP-Bailey pair at \eqref{mz03} into \eqref{wpbteq1}, set
$\rho_1=y$ and $\rho_2=z$ and replace $k$ with $q$. For
\eqref{cor14eq2}, set $z=q/y$, let $N \to \infty$ and use
\eqref{whq3f2} to sum the resulting right side.
\end{proof}

\begin{corollary}
{\allowdisplaybreaks
\begin{multline}\label{cor15eq}
_{5} \phi _{4} \left[
\begin{matrix}
\displaystyle{\frac{q^2}{a d}},\frac{d q}{a}, q^{-N},q^{1-N},q^2\\
 \displaystyle{\frac{a q^2}{d}},a d q,\displaystyle{\frac{
q^{2-N}}{a^2}},\displaystyle{\frac{q^{3-N}}{a^2}}
\end{matrix}
; q^2,q^2 \right ]\\ -
 a q \frac{(1-q^{-N})(1-q/ad)(1-d/a)}
 {(1-q^{2-N}/a^2)(1-a d)(1- aq/d)}
\, _{5} \phi _{4} \left[
\begin{matrix}
\displaystyle{\frac{q^3}{a d}},\frac{d q^2}{a}, q^{1-N},q^{2-N},q^2\\
 \displaystyle{\frac{a q^3}{d}},a d q^2,\displaystyle{\frac{
q^{3-N}}{a^2}},\displaystyle{\frac{q^{4-N}}{a^2}}
\end{matrix}
; q^2,q^2 \right ]\\
=\frac{(a,a^2;q)_N}{(q a,a^2/q;q)_N}\,_{10}W_{9}\left(
a;\sqrt{q},-\sqrt{q},d, \frac{q}{d},q,a^2 q^{N},q^{-N};q,-a\right).
\end{multline}
}
\end{corollary}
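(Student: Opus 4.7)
The plan is to apply \eqref{wpbteq2} of Theorem \ref{wpbt} to the WP-Bailey pair $(\alpha_n^{(2)}(a,q),\beta_n^{(2)}(a,q))$ of Lemma \ref{l3}, taking $k=q$. Under this specialization the prefactor $(qa/k,qa^2/k;q)_N/(qa,qa^2/k^2;q)_N$ on the right side of \eqref{wpbteq2} collapses to $(a,a^2;q)_N/(qa,a^2/q;q)_N$, which is already the scalar in front of the $_{10}W_9$ in \eqref{cor15eq}.

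For the rest of the right side of \eqref{wpbteq2}, the goal is to identify the inner sum with the advertised very-well-poised series. At $k=q$ the factor $(k;q)_{2n}/(qa^2/k;q)_{2n}$ becomes $(q;q)_{2n}/(a^2;q)_{2n}$, and this splits as $(\sqrt q,-\sqrt q,q,-q;q)_n/(a\sqrt q,-a\sqrt q,a,-a;q)_n$ via the standard identities $(q;q)_{2n}=(\sqrt q,-\sqrt q;q)_n(q,-q;q)_n$ and $(a^2;q)_{2n}=(a\sqrt q,-a\sqrt q;q)_n(a,-a;q)_n$. Multiplying by $(aq/k)^n\alpha_n^{(2)}(a,q)=a^n\alpha_n^{(2)}(a,q)$ causes the $(q;q)_n$, $(-q;q)_n$, $(a;q)_n$, $(-a;q)_n$ Pochhammer factors appearing explicitly in the definition of $\alpha_n^{(2)}(a,q)$ to cancel against those four new factors. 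Together with the residual piece $(q^{N+1}a^2/k,q^{-N};q)_n/(aq^{N+1},q^{-N}k/a;q)_n=(a^2q^N,q^{-N};q)_n/(aq^{N+1},q^{1-N}/a;q)_n$ and the sign $(-1)^na^n=(-a)^n$, this leaves precisely $_{10}W_9(a;\sqrt q,-\sqrt q,d,q/d,q,a^2q^N,q^{-N};q,-a)$.

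On the left side of \eqref{wpbteq2}, because $\beta_n^{(2)}(a,q)$ is defined piecewise in the parity of $n$, split the sum according to $n=2m$ and $n=2m+1$. The identity $(A;q)_{2m}=(A;q^2)_m(Aq;q^2)_m$ applied with $A=q^{-N}$ and $A=q^{2-N}/a^2$ converts the even contribution directly into the first $_5\phi_4$ of \eqref{cor15eq} (with base $q^2$ and argument $q^2$). For the odd piece, the analogue $(A;q)_{2m+1}=(A;q^2)_{m+1}(Aq;q^2)_m$ produces $(\,\cdot\,;q^2)_{m+1}$ Pochhammers; stripping off the $m=0$ factor from each of $(q^{-N};q^2)_{m+1}$, $(q/ad;q^2)_{m+1}$, $(d/a;q^2)_{m+1}$, $(q^{2-N}/a^2;q^2)_{m+1}$, $(ad;q^2)_{m+1}$, and $(aq/d;q^2)_{m+1}$, and combining them with the factor $-a$ coming from $\beta_{2m+1}^{(2)}$ and the extra $q$ coming from $q^{2m+1}$, produces exactly the coefficient $-aq(1-q^{-N})(1-q/ad)(1-d/a)/[(1-q^{2-N}/a^2)(1-ad)(1-aq/d)]$ multiplied by what is visibly the second $_5\phi_4$ of \eqref{cor15eq}.

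The main obstacle is bookkeeping rather than conceptual: carefully tracking the base-$q$ to base-$q^2$ conversions on both sides, and cleanly factoring out the $m=0$ terms from the odd contribution. Once these routine computations are carried out, equating the two sides delivers \eqref{cor15eq}.
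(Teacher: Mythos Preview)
Your proposal is correct and follows precisely the route taken in the paper: insert the WP-Bailey pair $(\alpha_n^{(2)},\beta_n^{(2)})$ of Lemma~\ref{l3} into \eqref{wpbteq2} with $k=q$, then identify the right side with the $_{10}W_9$ (using the redundant parameters $q$ and $a$, as noted in the remark following the corollary) and split the left side by parity to obtain the two $_5\phi_4$ series. The paper states this in a single line; your expanded bookkeeping of the base-$q$ to base-$q^2$ conversions and the extraction of the odd prefactor is accurate.
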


\begin{proof}
Insert the WP-Bailey pair at \eqref{mz03} into \eqref{wpbteq2},  and
replace $k$ with $q$.
\end{proof}
Remark: The extra $q$-products inserted in each of the series on the
right side of \eqref{id3eq1} and in \eqref{cor15eq} are there to
allow these series to be represented as $_{r+1}\phi_r$ or
$_{r+1}W_r$ series.

\begin{corollary}
{\allowdisplaybreaks
\begin{multline}\label{warcorex1}
\negthickspace
_{12}W_{11}\left(k;\frac{qa}{k},\frac{k}{\sqrt{a}},\frac{-k}{\sqrt{a}},k\sqrt{\frac{q}{a}},-k\sqrt{\frac{q}{a}},
\sqrt{ak}q^N,
-\sqrt{ak}q^N,-q^{-N},q^{-N};q,q^2\right)\\
=\frac{\left( \frac{k}{a}, k^2 q^2,-a, -a q;q^2\right)_N}{\left(
\frac{a}{k}, a^2 q^2,-k q, -k q^2;q^2\right)_N}\left( \frac{a
q}{k}\right)^N\\
\times \, _7\phi_6 \left(
\begin{matrix}
i q\sqrt{a}, -i q \sqrt{a}, \frac{a^2 q}{k}, \sqrt{ak}q^N,
-\sqrt{ak}q^N, -q^{-N}, q^{-N}\\
\\
i \sqrt{a}, -i  \sqrt{a}, a q^{1+N}, -a q^{1+N},
\sqrt{\frac{a}{k}}q^{1-N},-\sqrt{\frac{a}{k}}q^{1-N}
\end{matrix};q,q
\right).
\end{multline}
}
\end{corollary}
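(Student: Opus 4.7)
The plan is to substitute the WP-Bailey pair $(\alpha_n^{(1)}(a,k),\beta_n^{(1)}(a,k))$ from Lemma \ref{l1} into Warnaar's third WP-Bailey chain \eqref{War3}, with the free variable $m$ of that chain set to $m=k$ and the truncation index to $n=N$. Equating the two resulting expressions and solving for the transformed right side of \eqref{War3}---which carries the factor $(1-mq^{2j})/(1-m)$ and is therefore very-well-poised in $k$---will yield the ${}_{12}W_{11}$ on the left of \eqref{warcorex1}, while the transformed left side of \eqref{War3}---which carries $(1+aq^{2j})/(1+a)$ and is therefore very-well-poised in $-a$---will yield the ${}_7\phi_6$ on the right.

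To transform the left side of \eqref{War3}, substitute $\alpha_j^{(1)}(a,k)=(qa^2/k^2;q)_j(k/a)^j/(q;q)_j$ and apply the $q^2$-version of \eqref{aqn-r} to the factors $(k/a;q^2)_{N-j}/(q^2;q^2)_{N-j}$ to pull out the $j$-independent piece $(k/a,ak;q^2)_N/(q^2,a^2q^2;q^2)_N$. The remaining sum over $j$ carries the well-poised factor $(1+aq^{2j})/(1+a)$ together with the base-$q^2$ Pochhammers $(q^{-2N};q^2)_j$, $(akq^{2N};q^2)_j$, $(q^{2-2N}a/k;q^2)_j$ and $(a^2q^{2+2N};q^2)_j$, each of which splits into a pair of base-$q$ Pochhammers via $(X^2;q^2)_j=(X,-X;q)_j$. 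Using also $(1+aq^{2j})/(1+a)=(iq\sqrt{a},-iq\sqrt{a};q)_j/(i\sqrt{a},-i\sqrt{a};q)_j$, the inner sum reassembles into the ${}_7\phi_6$ appearing on the right of \eqref{warcorex1}.

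To transform the right side of \eqref{War3}, substitute $\beta_j^{(1)}(a,k)=(qa/k,k;q)_j(k^2/a;q)_{2j}/[(k^2/a,q;q)_j(aq;q)_{2j}]$ and again apply the $q^2$-version of \eqref{aqn-r} to extract the outer prefactor $q^{-N}(k/a)^N(-kq;q)_{2N}(a/k,ak;q^2)_N/[(-a;q)_{2N}(q^2,k^2q^2;q^2)_N]$. The factor $(1-kq^{2j})/(1-k)$ combines with the $(k;q)_j$ coming from $\beta_j^{(1)}$ to form the full VWP triple $(k,q\sqrt{k},-q\sqrt{k};q)_j/(\sqrt{k},-\sqrt{k};q)_j$. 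Splitting $(k^2/a;q)_{2j}=(k/\sqrt{a},-k/\sqrt{a},k\sqrt{q/a},-k\sqrt{q/a};q)_j$ and $(aq;q)_{2j}=(q\sqrt{a},-q\sqrt{a},\sqrt{aq},-\sqrt{aq};q)_j$, and performing the same base-$q^2$ splittings of $(akq^{2N};q^2)_j$, $(q^{-2N};q^2)_j$, $(q^{2-2N}k/a;q^2)_j$ and $(k^2q^{2+2N};q^2)_j$ as in the previous step, identifies the inner sum as the ${}_{12}W_{11}$ on the left of \eqref{warcorex1}.

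Finally, equating the two transformed sides of \eqref{War3} and solving for the ${}_{12}W_{11}$ gives the ratio of the outer prefactors, which collapses using $(-a;q)_{2N}=(-a,-aq;q^2)_N$, $(-kq;q)_{2N}=(-kq,-kq^2;q^2)_N$ and $q^N(a/k)^N=(aq/k)^N$ to the scalar $(aq/k)^N(k/a,k^2q^2,-a,-aq;q^2)_N/(a/k,a^2q^2,-kq,-kq^2;q^2)_N$ displayed in \eqref{warcorex1}. The main obstacle is purely bookkeeping: keeping the base-$q$/base-$q^2$ splittings consistent throughout, and correctly tracking the powers of $q$ and the ratios $(k/a)^j$ introduced by the applications of \eqref{aqn-r}.
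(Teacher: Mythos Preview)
Your proposal is correct and follows essentially the same route as the paper: the paper first rewrites \eqref{War3} (with $m$ replaced by $k$) via the $q^2$-version of \eqref{aqn-r} into the intermediate form \eqref{War3n}, and then inserts the pair from Lemma~\ref{l1} and rearranges---exactly the sequence of steps you describe, though you spell out the Pochhammer splittings in more detail than the paper's ``rearranging''. One minor remark: your phrase ``very-well-poised in $-a$'' for the ${}_7\phi_6$ is a slight overstatement, since that series carries the VWP-type factor $(1+aq^{2j})/(1+a)$ but is not a full ${}_{r+1}W_r$; this does not affect the argument.
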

\begin{proof}
After replacing $m$ with $k$ and employing some simple
transformations for $q$-products, \eqref{War3} can be rewritten as
\begin{multline}\label{War3n}
\sum_{j=0}^N \frac{1-k q^{2j}}{1-k}\frac{(a k q^{2N},
q^{-2N};q^2)_j}{\left(k^2 q^{2+2N},
\frac{k}{a}q^{2-2N};q^2\right)_j}q^{2j}\beta_j(a,k)\\
= \frac{\left( \frac{k}{a},k^2q^2;q^2\right)_N}
{\left( \frac{a}{k},a^2q^2;q^2\right)_N} \frac{(-a;q)_{2n}}{(-k q;q)_{2n}}\left(\frac{a q}{k}\right)^{N}\\
\times \sum_{j=0}^N\frac{1+a q^{2j}}{1+a}\frac{(a k q^{2N},
q^{-2N};q^2)_j}{\left(a^2 q^{2+2N},
\frac{a}{k}q^{2-2N};q^2\right)_j}\left(\frac{a
q}{k}\right)^{j}\alpha_j(a,k).
 \end{multline}
The result follows, upon inserting the pair from Lemma \ref{l1},
 and rearranging.
\end{proof}

Inserting the new pairs in other WP-Bailey chains will lead to
other, possibly new, transformations of basic hypergeometric series,
but we refrain from further examples here.

{\allowdisplaybreaks
\section{WP-Burge Pairs}
}

In \cite{AB02}, the authors termed a WP-Bailey pair
$(\alpha_{n}(1,k), \beta_{n}(1,k))$ in which $\alpha_{n}(1,k)$ does
not depend on $k$ a \emph{WP-Burge pair.}

One such pair that they derive in \cite[(7.16)]{AB02} is the pair in
the following theorem. For completeness, we give an alternative
proof of this result.
\begin{theorem}
Define {\allowdisplaybreaks
\begin{align}\label{newwp}
\alpha_{n}(1,k)&=
\begin{cases}
1,&n=0,\\
q^{-n/2}+q^{n/2},&n\geq1,
\end{cases}\\
\beta_{n}(1,k)&=
\frac{(k\sqrt{q},k;q)_{n}}{(\sqrt{q},q;q)_{n}}q^{-n/2}. \notag
\end{align}
} Then $(\alpha_{n}(1,k),\beta_{n}(1,k))$ satisfy \eqref{WPpair}
(with $a=1$).
\end{theorem}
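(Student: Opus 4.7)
\noindent\emph{Proof plan.} The plan is to verify \eqref{WPpair} at $a=1$ directly. Using the second, equivalent form of \eqref{WPpair} (obtained from \eqref{aqn-r}), the identity to be established reduces to
\[
\sum_{j=0}^{n} \frac{(q^{-n};q)_j\,(kq^n;q)_j}{(q^{1-n}/k;q)_j\,(q^{n+1};q)_j} \left(\frac{q}{k}\right)^{\!j} \alpha_j(1,k) \;=\; \frac{(k\sqrt{q};q)_n\,(q;q)_n}{(\sqrt{q};q)_n\,(k;q)_n}\, q^{-n/2}.
\]
My first step is to isolate the $j=0$ contribution (which equals $1$) and write $\alpha_j(1,k) = q^{-j/2}(1+q^j)$ for $j \geq 1$, absorbing the $q^{-j/2}$ into the summand so that $(q/k)^j$ becomes $(q^{1/2}/k)^j$ and a residual factor $(1+q^j)$ remains.

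The key observation is that $(1+q^j)$ is precisely $\lim_{a\to 1}(1-aq^{2j})(a;q)_j/[(1-a)(q;q)_j]$ for $j\ge 1$, the hallmark of a very-well-poised series at $a=1$.  Accordingly, the reduced sum is exactly the $a \to 1$ limit of Bailey's terminating $_6\phi_5$ summation
\[
\sum_{j=0}^{n} \frac{1-aq^{2j}}{1-a}\, \frac{(a,b,c,q^{-n};q)_j}{(q,\,aq/b,\,aq/c,\,aq^{n+1};q)_j} \left(\frac{aq^{n+1}}{bc}\right)^{\!j} \;=\; \frac{(aq,\,aq/bc;q)_n}{(aq/b,\,aq/c;q)_n}
\]
specialised to $b = kq^n$ and $c = q^{1/2}$.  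With these choices, $aq/b=q^{1-n}/k$ and $aq/c=q^{1/2}=c$ at $a=1$, so the $(c;q)_j$ numerator cancels the $(aq/c;q)_j$ denominator, and $aq^{n+1}/bc=q^{1/2}/k$ matches the argument of the reduced target sum.

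The final step is to simplify the evaluated right-hand side $(q;q)_n(q^{1/2-n}/k;q)_n/[(q^{1-n}/k;q)_n(q^{1/2};q)_n]$ by twice applying the elementary reflection $(q^{1-n}/x;q)_n = (-x)^{-n} q^{-\binom{n}{2}}(x;q)_n$: once with $x=k\sqrt{q}$ (which converts $(q^{1/2-n}/k;q)_n$ to a multiple of $(k\sqrt{q};q)_n$) and once with $x=k$ (which converts $(q^{1-n}/k;q)_n$ to a multiple of $(k;q)_n$).  The powers of $-k$ cancel and the accumulated powers of $q$ collapse to the single factor $q^{-n/2}$, yielding exactly $(k\sqrt{q};q)_n(q;q)_n q^{-n/2}/[(\sqrt{q};q)_n(k;q)_n]$, as required.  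The main technical obstacle will be handling the $a\to 1$ limit in Bailey's formula carefully: since $(1-aq^{2j})/(1-a)$ is a $0/0$ form at $j=0$ and diverges for $j\ge1$, one must pair it with the vanishing $(a;q)_j$ to extract the finite limit $(1+q^j)(q;q)_j$ for $j\ge1$, and verify separately that the $j=0$ term passes to $1$ (not $2$), in accordance with $\alpha_0(1,k)=1$.
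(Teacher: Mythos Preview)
Your argument is correct, and it takes a genuinely different route from the paper's own proof.  The paper starts from Bailey's bilateral $_{6}\psi_{6}$ summation, specialises $a=-1$, $b=-c$, $d=-e$ so that the positive- and negative-index halves of the bilateral series coincide, then terminates the resulting unilateral identity by setting $e=q^{-N}$, $c=kq^{N}$ (after $q\mapsto\sqrt{q}$).  You instead apply the terminating very-well-poised $_{6}W_{5}$ (Jackson's sum) directly, with $b=kq^{n}$, $c=\sqrt{q}$, and pass to the limit $a\to 1$, using the standard observation that $(1-aq^{2j})(a;q)_{j}\big/[(1-a)(q;q)_{j}]\to 1+q^{j}$ for $j\ge 1$ and $\to 1$ for $j=0$, which reproduces exactly the prescribed $\alpha_{j}(1,k)$.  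Your approach is more elementary in the sense that the $_{6}W_{5}$ sum is a simpler and more commonly cited identity than the $_{6}\psi_{6}$, and the limiting argument is entirely routine; the paper's bilateral route, on the other hand, makes the symmetry $q^{n/2}+q^{-n/2}$ visible as the fold of a two-sided series.  Both arguments are short, and the final $q$-product simplification via $(q^{1-n}/x;q)_{n}=(-x)^{-n}q^{-\binom{n}{2}}(x;q)_{n}$ that you carry out is correct and gives the stated $\beta_{n}(1,k)$.
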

\begin{proof}
We begin by recalling Bailey's $\,_{6}\psi_6$ summation formula
\cite{W36}. {\allowdisplaybreaks
\begin{multline*}
\frac{ (aq,aq/bc,aq/bd,aq/be,aq/cd,aq/ce,aq/de,q,q/a;q)_{\infty} } {
(aq/b,aq/c,aq/d,aq/e,q/b,q/c,q/d,q/e,qa^2/bcde;q)_{\infty} }\\ =
\sum_{n=-\infty}^{\infty} \frac{(1-a q^{2n}) (b,c,d,e;q)_{n}}
{(1-a)(aq/b,aq/c,aq/d,aq/e;q)_{n}} \left( \frac{q
a^2}{b c d e}\right)^n\\
=1+\sum_{n=1}^{\infty} \frac{(1-a q^{2n}) (b,c,d,e;q)_{n}}
{(1-a)(aq/b,aq/c,aq/d,aq/e;q)_{n}} \left( \frac{q a^2}{b c d
e}\right)^n\\
+\sum_{n=1}^{\infty}\frac{(1-1/a q^{2n}) (b/a,c/a,d/a,e/a;q)_{n}}
{(1-1/a)(q/b,q/c,q/d,q/e;q)_{n}} \left( \frac{q a^2}{b c d
e}\right)^n,
\end{multline*}
} where the second equality follows from the definition
\[
(z;q)_{-n}= \frac{ (-1)^n q ^{n(n+1)/2} }
                 { z^n(q/z;q)_n}.
\]
Next, set $a=-1$, $b=-c$ and $d=-e$, so that both sums in the
final expression above become equal, to get
\begin{multline*}
\frac{
(-q,q/c^2,q/e^2;q)_{\infty}(q^2/c^2e^2,q^2/c^2e^2,q^2;q^2)_{\infty}
} { (q/c^2e^2;q)_{\infty}(q^2/c^2,q^2/c^2,q^2/e^2,q^2/e^2;q^2)_{\infty} }\\
= 1+\sum_{n=1}^{\infty} \frac{(1+ q^{2n}) (c^2,e^2;q^2)_{n}}
{(q^2/c^2,q^2/e^2;q^2)_{n}} \left( \frac{q }{c^2  e^2}\right)^n,
\end{multline*}
or, upon replacing $c^2$ with $c$ and $e^2$ with $e$,
\begin{multline}\label{ceproducteq}
\frac{ (-q,q/c,q/e;q)_{\infty}(q^2/c e,q^2/c e,q^2;q^2)_{\infty}
} { (q/c e;q)_{\infty}(q^2/c,q^2/c,q^2/e,q^2/e;q^2)_{\infty} }\\
= 1+\sum_{n=1}^{\infty} \frac{(1+ q^{2n}) (c,e;q^2)_{n}}
{(q^2/c,q^2/e;q^2)_{n}} \left( \frac{q }{c  e}\right)^n.
\end{multline}

 Now replace
$q$ with $\sqrt{q}$, set $e=q^{-N}$, $c=k q^{N}$ to get, after
some minor rearrangements, that
\begin{equation*}
1+\sum_{n=1}^{N} \frac{ (k q^N,q^{-N};q)_{n}}
{(q^{1-N}/k,q^{1+N};q)_{n}} \left( \frac{q }{k}\right)^n(q^{-n/2}+
q^{n/2})=\frac{ (k \sqrt{q},q;q)_{N}} {(k,\sqrt{q};q)_{N}}\,q^{-N/2}
\end{equation*}
The result now follows from \eqref{WPpair}, after some simple
manipulations.
\end{proof}

One reason this WP-Bailey pair is somewhat interesting is that the
special case $k=0$ of \eqref{newwp} gives Slater's standard Bailey
pair \textbf{F3} from \cite{S51}. Recall that a pair of sequences
$(\alpha_n, \beta_n)$ is termed a \emph{Bailey pair relative to
$x/q$}, if they satisfy
\begin{equation}\label{bpeq}
\beta_n = \sum_{r=0}^{n} \frac{\alpha_r}{(q;q)_{n-r}(x;q)_{n+r}},
\end{equation}

We  also recall the following result of Bailey, a particular case of
the ``Bailey Transform", from his 1949 paper \cite{B49}.

\begin{theorem}\label{bt}
Subject to suitable convergence conditions, if the sequences
$\{\alpha_n\}$ and $\{\beta_n\}$ satisfy \eqref{bpeq}, then
\begin{equation}\label{Baileyeq} \sum_{n=0}^{\infty}
(y,z;q)_{n}\left ( \frac{x}{yz}\right )^{n} \beta_n = \frac{(x/y,x
/z;q)_{\infty}}{ (x , x/yz;q)_{\infty}} \sum_{n=0}^{\infty}
\frac{(y,z;q)_{n}}{(x/y,x/z;q)_n}\left ( \frac{x}{yz}\right )^{n}
\alpha_n.
\end{equation}
\end{theorem}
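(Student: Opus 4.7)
The plan is to prove \eqref{Baileyeq} by direct substitution, interchange of summation, and a single application of the $q$-Gauss summation. First I would substitute the defining relation \eqref{bpeq} for $\beta_n$ into the left-hand side of \eqref{Baileyeq}, obtaining the double sum
\[
\sum_{n=0}^{\infty}\sum_{r=0}^{n}\frac{(y,z;q)_n}{(q;q)_{n-r}(x;q)_{n+r}}\left(\frac{x}{yz}\right)^n\alpha_r.
\]
After swapping the order of summation and setting $m=n-r\geq 0$, each relevant $q$-shifted factorial splits via $(a;q)_{r+m}=(a;q)_r(aq^r;q)_m$ and $(x;q)_{2r+m}=(x;q)_{2r}(xq^{2r};q)_m$, and the expression rearranges to
\[
\sum_{r=0}^{\infty}\frac{(y,z;q)_r}{(x;q)_{2r}}\left(\frac{x}{yz}\right)^r\alpha_r\,\sum_{m=0}^{\infty}\frac{(yq^r,zq^r;q)_m}{(q,xq^{2r};q)_m}\left(\frac{x}{yz}\right)^m.
\]

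Next I would recognise the inner sum on $m$ as a ${}_2\phi_1$ series to which the $q$-Gauss summation
\[
{}_2\phi_1(a,b;c;q,c/ab)=\frac{(c/a,c/b;q)_\infty}{(c,c/ab;q)_\infty}
\]
applies with $a=yq^r$, $b=zq^r$, $c=xq^{2r}$ (so that $c/ab=x/yz$). This evaluates the inner sum to $(xq^r/y,xq^r/z;q)_\infty/(xq^{2r},x/yz;q)_\infty$.

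Combining this with the factor $1/(x;q)_{2r}$ and applying $(x;q)_{2r}(xq^{2r};q)_\infty=(x;q)_\infty$ together with $(xq^r/y;q)_\infty=(x/y;q)_\infty/(x/y;q)_r$ and the analogous identity with $z$, the $r$-dependent coefficient collapses to
\[
\frac{(x/y,x/z;q)_\infty}{(x,x/yz;q)_\infty}\cdot\frac{1}{(x/y,x/z;q)_r}.
\]
The infinite product pulls out of the $r$-sum, while the remaining finite pieces combine with $(y,z;q)_r(x/yz)^r\alpha_r$ to produce exactly the right-hand side of \eqref{Baileyeq}. The only delicate point is justifying the interchange of summation and the applicability of $q$-Gauss, which is the content of the phrase \emph{subject to suitable convergence conditions}; the remaining manipulations of $q$-shifted factorials are routine.
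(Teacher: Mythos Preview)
Your argument is correct and is, in fact, the classical proof of this transformation (essentially Bailey's own): substitute \eqref{bpeq}, interchange the order of summation, and evaluate the inner sum by the $q$-Gauss summation. Each algebraic step checks out, including the identification $c/ab=x/yz$ needed for $q$-Gauss and the subsequent collapse of the $r$-dependent infinite products via $(x;q)_{2r}(xq^{2r};q)_\infty=(x;q)_\infty$ and $(xq^r/y;q)_\infty=(x/y;q)_\infty/(x/y;q)_r$.

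Note, however, that the paper does not supply its own proof of this theorem: it is merely quoted as a known result of Bailey from \cite{B49}, so there is nothing in the paper to compare against. Your write-up would serve perfectly well as the missing proof.
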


Thus the
 pair at \eqref{newwp} ``lifts" this standard Bailey pair to a
WP-Bailey pair and lifts all the series--product identities
following from \textbf{F3} to more general series--product
identities containing the free parameter $k$.

More generally, let  $(\alpha_n(a,k), \beta_n(a,k))$ be a WP-Bailey
pair in which $\alpha_n(a,k)$ is independent of $k$. Suppose further
that this pair is a ``lift" of a standard Bailey pair, in the sense
that setting $k=0$ in the WP-Bailey pair recovers the Bailey pair.
By comparing \eqref{Baileyeq} and \eqref{wpbteq1b} (first setting
$y=\rho_1$, $z=\rho_2$ and $x=aq$ in \eqref{Baileyeq}) we see that
the two infinite series containing $\alpha_n=\alpha_n(a,k)$ are
identical. This means that if particular choices of $y$ and $z$ in
\eqref{Baileyeq} lead to an identity of the Rogers-Ramanujan type,
then the same choices for $\rho_1$ and $\rho_2$ in \eqref{wpbteq1b}
will lead to a more general series--product identity containing an
extra free parameter, namely $k$, and this more general identity
will revert back to the original identity of Rogers-Ramanujan type,
upon setting $k=0$.

The substitution of this pair at \eqref{newwp} into
\eqref{wpbteq1} (first setting $\rho_1=y$, $\rho_2=z$ in
\eqref{wpbteq1} as above, and then setting $a=1$) and
\eqref{wpbteq1b} (in addition, replacing $q$ with $q^2$ and
specializing $y$ and $z$) leads to the following corollary.
\begin{corollary}
\begin{multline}\label{q1/2pair}
 _{8} \phi _{7} \left [
\begin{matrix}
k,q\sqrt{k}, -q\sqrt{k},y,z,
k\sqrt{q}, kq^{1+N}/yz,q^{-N}\\
\sqrt{ k},-\sqrt{k}, q k/y,q k/z,\sqrt{q},k q^{1+N}, y z q^{-N}
\end{matrix}
; q,\sqrt{q} \right ]\\=
\frac{(qk,qk/yz,q/y,q/z;q)_{N}} {(qk/y,qk/z,q,q/yz;q)_{N}}\\
\times \left( 1+\sum_{n=0}^{N}\frac{(1+q^{n})(y,z,
kq^{1+N}/yz,q^{-N};q)_{n}}{(q/y,q/z, q^{1+N},y z
q^{-N}/k;q)_n}\left (\frac{\sqrt{q} }{k}\right)^{n} \right).
\end{multline}
\begin{equation}\label{q1/2pairpr}
\sum_{n=0}^{\infty} \frac{ (1-k q^{4n})(k;q)_{2n}q^{2n^2-n} }{
(1-k)(q;q)_{2n}}= \frac{(kq^2;q^2)_{\infty}}{(q;q^2)_{\infty} }.
\end{equation}
\begin{equation}\label{q1/2pairpr2}
\sum_{n=0}^{\infty} \frac{ (1-k q^{2n})(k;q)_{n}(-1)^nq^{n(n-1)/2}
}{ (1-k)(q;q)_{n}}= 0.
\end{equation}
\begin{equation}\label{q1/2pairpr3}
\sum_{n=0}^{\infty} \frac{ (1-k
q^{4n})(-q;q^2)_n(k;q)_{2n}q^{n^2-n} }{
(1-k)(-kq;q^2)_n(q;q)_{2n}}=
\frac{(kq^2,-1;q^2)_{\infty}}{(-kq,q;q^2)_{\infty} }.
\end{equation}
\end{corollary}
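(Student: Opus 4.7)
The unifying strategy is that all four identities in the corollary follow from inserting the WP-Bailey pair \eqref{newwp} (with $a=1$) into either \eqref{wpbteq1} (for the finite identity) or its infinite analogue \eqref{wpbteq1b} (for the three product identities) and specializing the parameters $\rho_1,\rho_2$. A useful preliminary observation is that, under the substitution $q\to q^2$, the $\beta_n$ of \eqref{newwp} rewrites as $(k;q)_{2n}q^{-n}/(q;q)_{2n}$ via $(a;q)_{2n}=(a;q^2)_n(aq;q^2)_n$, which is precisely the shape of the $\beta$-side appearing in \eqref{q1/2pairpr} and \eqref{q1/2pairpr3}.

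For \eqref{q1/2pair}, I would set $\rho_1=y$, $\rho_2=z$, $a=1$ in \eqref{wpbteq1} and substitute \eqref{newwp}. The identity $(1-kq^{2n})/(1-k)=(q\sqrt k,-q\sqrt k;q)_n/(\sqrt k,-\sqrt k;q)_n$ combines with $\beta_n$ and the $q^n$ of \eqref{wpbteq1} to assemble the displayed $_8\phi_7$ with argument $\sqrt q$. The right-hand side splits because $\alpha_0=1$ while $\alpha_n=q^{-n/2}+q^{n/2}$ for $n\ge 1$, so each $n\ge 1$ summand picks up $(q/k)^n(q^{-n/2}+q^{n/2})=(1+q^n)(\sqrt q/k)^n$, yielding the claimed form.

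For \eqref{q1/2pairpr} and \eqref{q1/2pairpr3} I would apply \eqref{wpbteq1b} after replacing $q$ by $q^2$ throughout (in both the pair and the transformation), with $a=1$. Letting $\rho_1,\rho_2\to\infty$ and using $(\rho;q^2)_n\sim(-\rho)^nq^{n(n-1)}$ produces $q^{2n^2-n}$ on the $\beta$-side of \eqref{q1/2pairpr}; the prefactor collapses to $(kq^2;q^2)_\infty/(q^2;q^2)_\infty$ and the $\alpha$-sum becomes $\sum_{n\in\mathbb Z}q^{2n^2-n}$, which equals $(q^2;q^2)_\infty/(q;q^2)_\infty$ by Gauss's identity $\sum_{k\ge 0}q^{k(k+1)/2}=(q^2;q^2)_\infty/(q;q^2)_\infty$. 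For \eqref{q1/2pairpr3} the choice $\rho_1=-q$, $\rho_2\to\infty$ converts $(\rho_1;q^2)_n/(kq^2/\rho_1;q^2)_n$ into the required $(-q;q^2)_n/(-kq;q^2)_n$ and yields $q^{n^2-n}$ on the $\beta$-side. Finally, \eqref{q1/2pairpr2} is the specialization $\rho_1=\sqrt q$, $\rho_2\to\infty$ of \eqref{wpbteq1b} at base $q$ with $a=1$: the first choice cancels $(\sqrt q;q)_n$ in $\beta_n$ and the limit brings in $(-1)^nq^{n(n-1)/2}$, reproducing the stated LHS; the RHS vanishes because its $\alpha$-sum reduces to $1+\sum_{n\ge 1}(-1)^n(q^{n(n-1)/2}+q^{n(n+1)/2})$, which collapses to $0$ by consecutive pairwise cancellation of triangular exponents.

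The main obstacle will be the verification for \eqref{q1/2pairpr3}: after taking $\rho_2\to\infty$ the $\alpha$-sum evaluates to $2\sum_{n\ge 0}q^{n(n+1)}=2(q^4;q^4)_\infty/(q^2;q^4)_\infty$ by the $q\to q^2$ version of Gauss's identity, and reconciling the resulting right-hand side with the stated product $(kq^2,-1;q^2)_\infty/[(-kq,q;q^2)_\infty]$ requires the classical identities $(-1;q^2)_\infty=2(-q^2;q^2)_\infty$, $(-q^2;q^2)_\infty(q^2;q^4)_\infty=1$, and $(-q;q^2)_\infty(q;q^2)_\infty=(q^2;q^4)_\infty$.
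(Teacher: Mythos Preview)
Your proposal is correct and follows essentially the same approach as the paper: substitute the WP-Bailey pair \eqref{newwp} into \eqref{wpbteq1} for \eqref{q1/2pair} and into \eqref{wpbteq1b} (with the appropriate base change and specializations of $\rho_1,\rho_2$) for the three product identities. You supply considerably more detail than the paper's one-line sketch, including the explicit Gauss-identity and telescoping evaluations of the $\alpha$-sums and the product rearrangements needed for \eqref{q1/2pairpr3}, and you correctly note that \eqref{q1/2pairpr2} comes from \eqref{wpbteq1b} at base $q$ rather than $q^2$.
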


The last three identities  also follow as special cases of
Jackson's $_8\phi_7$ summation formula. However, they do
illustrate how a lift of a standard Bailey pair leads to
generalizations of identities arising from this standard pair (the
identities given by setting $k=0$ in the corollary above).

We will  investigate this phenomenon of WP-Bailey pairs that are
lifts of standard Bailey pairs further in a subsequent paper.

 \allowdisplaybreaks{

}
\end{document}